\newtheorem{theorem}{Theorem}[section]
\newtheorem{corollary}[theorem]{Corollary}
\newtheorem{lemma}[theorem]{Lemma}
\newtheorem{remark}[theorem]{Remark}
\newtheorem{proposition}[theorem]{Proposition}
\newenvironment{proof}[1][Proof]{\noindent\textbf{#1.} }{\ \rule{0.5em}{0.5em}}
\journal{Journal of Differential Equations}
\begin{document}

\begin{frontmatter}

\title{Local existence of solutions to the initial-value problem for one-dimensional strain-limiting viscoelasticity }

\author[1]{H. A. Erbay }
\ead{husnuata.erbay@ozyegin.edu.tr}

\author[2]{A. Erkip}
\ead{albert@sabanciuniv.edu}

\author[2]{Y. \c Seng\"{u}l \corref{cor1}}
\ead{yaseminsengul@sabanciuniv.edu}

\address[1]{Department of Natural and Mathematical Sciences, Faculty of Engineering, Ozyegin University,  Cekmekoy 34794, Istanbul, Turkey}
\address[2]{Faculty of Engineering and Natural Sciences, Sabanci University, Tuzla 34956, Istanbul, Turkey}
\cortext[cor1]{Corresponding author}

\title{Local existence of solutions to the initial-value problem for one-dimensional strain-limiting viscoelasticity}

\begin{abstract}
In this work we prove local existence of strong solutions to the initial-value problem arising in one-dimensional strain-limiting viscoelasticity, which is based on a nonlinear constitutive relation between the linearized strain, the rate of change of the linearized strain and the stress. The model is a generalization of the nonlinear Kelvin-Voigt viscoelastic solid under the assumption that the strain and the strain rate are small. We define an initial-value problem  for the stress variable and then, under the assumption that the nonlinear constitutive function is strictly increasing, we convert the problem to a new form for the sum of the strain and the strain rate. Using the theory of variable coefficient heat equation together with a fixed point argument we prove local existence of solutions. Finally, for several constitutive functions widely used in the literature we show that the assumption on which the proof of existence is based is not violated.
\end{abstract}

\begin{keyword}
viscoelasticity \sep strain-limiting theory \sep initial-value problem \sep local existence
\end{keyword}

\end{frontmatter}


\section{Introduction}\label{intro}
\setcounter{equation}{0}

Our main aim in the present work is to prove local well-posedness of the initial-value problem based on a model which explains nonlinear response of one-dimensional viscoelastic solids exhibiting limiting strain behaviour.

The most popular constitutive theories for modelling deformable elastic solids are explicit models for the stress, that is, an explicit expression is given for the stress in terms of the kinematical variables (such as the strain). Such models form a special sub-class of implicit models introduced by Rajagopal in \cite{Rajagopal-2003}, \cite{Rajagopal-2007}, where an implicit constitutive relation is specified between the stress and the strain. As a consequence of the linearization under the assumption that the gradient of the displacement is small, one can develop a model where a nonlinear relationship between the linearized strain and the stress is specified. This is  called a \textit{strain-limiting model} and, in fact, it is capable of explaining the bahaviour observed in some experiments (see e.g. \cite{Rajagopal-2014}, \cite{Saito-et-all}). 

There has been quite an interest in such strain-limiting models recently (see e.g. \cite{BuMaRaSu}, \cite{Bus-2009}, \cite{Bus-Raj-2011}, \cite{Erbay-Sengul}, \cite{Huang-Raj-Dai}, \cite{Mai-Walton-2015}, \cite{Raj-Sac-2014}) where most of the studies are for elastic materials.  For viscoelasticity one can refer to Muliana et al. \cite{Mul-Raj-Wine} who developed a quasi-linear strain-limiting viscoelastic model where the linearized strain is expressed as an integral of a nonlinear measure of the stress. Similarly, Barta \cite{Barta} considered viscoelastic response described by an integral model involving a nonlinear relation between the strain and the history of the stress, and proved local existence of solutions to the initial and boundary-value problem with Dirichlet boundary conditions. In \cite{Rajagopal-2009} Rajagopal introduced a general implicit model allowing both the elastic and dissipative response to describe the response of viscoelastic solids.  The important point to note here is that this model is a generalization of  the nonlinear Kelvin-Voigt viscoelastic solid model under the assumption that the strain and strain rate are small. By considering special  subclasses of the general implicit model of \cite{Rajagopal-2009}, the existence of a weak solution \cite{BuMaRaj}, the circularly polarized wave propagation  \cite{Raj-Sac-2014} and the existence of one-dimensional traveling waves \cite{Erbay-Sengul} have been studied. In this article, we are interested in local well-posedness of a one-dimensional initial-value problem derived using the constitutive equation \eqref{Raj-model} which is a special case of the model introduced in \cite{Rajagopal-2009}.

In the present work, we convert the equation for the stress to obtain an equation in a new variable defined as the sum of the strain and the strain rate, and write it as a time-dependent heat equation. We use the results related to the variable coefficient heat equation (cf. \cite{Kaw-Shi}, \cite{Kob-Pec-Shi}, \cite{Pecher}) and the techniques from the theory of elliptic operators. The proof of the main theorem includes linearization around a given state, definition of a contractive mapping and the usage of Banach's fixed point theorem. It is worth noting that the analysis which is done in $\mathbb{R}$ in this work could also be done for a bounded domain under suitable boundary conditions. 

The structure of the paper is as follows. In Section \ref{ivp} we state the  constitutive relation of one-dimensional strain-limiting viscoelasticity and define the initial-value problem. In Section \ref{loc-existence} we prove local-in-time existence of solutions to the initial-value problem defined by \eqref{NL-eqn}-\eqref{ic-eta} and then, in Section \ref{strain-lim}, we discuss the condition for the local existence of solutions associated with several strain-limiting models.

\section{The initial value problem}\label{ivp}
\setcounter{equation}{0}

In this section we introduce the one-dimensional model based on strain-limiting viscoelasticity, define an initial-value problem in terms of the stress variable  and then convert the initial-value problem to a more tractable initial value problem in terms of a new variable. Hereafter we use dimensionless variables and parameters.

\subsection{The strain-limiting model of viscoelasticity}

 Rajagopal \cite{Rajagopal-2003}, \cite{Rajagopal-2007} introduced a new class of elastic materials, which includes classical elastic materials as special cases. The  response of such elastic materials is given by an implicit constitutive relation between the stress and the deformation gradient. Furthermore, to model elastic materials with limiting strain, he considered that the elastic materials can only undergo small displacement gradients. This requirement places no restriction on the magnitude of the stress while the strain remains small. In one-dimensional case, the implicit constitutive equation  is of the form $\phi(\epsilon, S)=0$ where $S(x, t)$ is the Cauchy stress and $\epsilon(x,t)$ is the linearized strain. The relationship between the linearized strain  $\epsilon(x,t)$  and the displacement function $u(x,t)$ is given by $\epsilon=u_{x}$. A particular case of the above constitutive relation is the strain-limiting model $\epsilon=h(S)$, where $h$ is a nonlinear function \cite{Rajagopal-2003, Rajagopal-2007}. It is worth mentioning that, for the strain-limiting elastic materials, the displacement gradients and the strain remain bounded even if the stress tends to infinity.

 Rajagopal \cite{Rajagopal-2009} extended the above approach to rate-type viscoelastic materials and proposed an implicit constitutive equation which takes the form $\phi(\epsilon, \epsilon_{t}, S)=0$ in one-dimensional case, where  the term $\epsilon_{t}$ is the time rate of change of the linearized strain  and is called the strain rate.  Erbay and \c{S}eng\"ul \cite{Erbay-Sengul} studied the particular case of  the strain-limiting rate-type viscoelastic model given in \cite{Rajagopal-2009} and \cite{Raj-Sac-2014} as
\begin{equation}\label{Raj-model}
    \epsilon + \nu \epsilon_{t} = h(S),
\end{equation}
where $\nu>0$ is the viscosity constant, and $h$ is a nonlinear function with $h(0)=0$. Clearly, the model \eqref{Raj-model}, which we will use from now on, is based on the assumption that both the strain and the time rate of strain are small. In \cite{Erbay-Sengul}, differentiating the equation of linear momentum $u_{tt} = S_{x}$ with respect to $x$, rewriting it in terms of the strain as  $\epsilon_{tt} = S_{xx}$ and using \eqref{Raj-model} in this equation, Erbay and \c{S}eng\"ul obtained the third-order semilinear equation
\begin{equation}\label{PDE-T}
    S_{xx} + \nu \,S_{xxt} = h(S)_{tt}, 
\end{equation}
and studied traveling wave solutions under the assumption of two constant equilibrium states at infinity. Clearly $S \equiv  0$ is an equilibrium solution of \eqref{PDE-T}. Since $S$ involves  both the elastic and dissipative response  of the body, the expectation that the first and the second terms on the left of \eqref{PDE-T} represent elastic and dissipative effects, respectively, in our strain-limiting model is not valid. Contrary to the equations derived from the classical viscoelastic models, the novelty of this equation lies in the fact that  the inertia term involving the second-order time derivative is nonlinear. Furthermore, it is worth noting that the auxiliary conditions (such as the initial or boundary conditions) given in terms of the stress $S$ rather than the displacement $u$ are more relevant for the above model.

\subsection{The initial-value problem}

Here we consider the case in which the body is all of $\mathbb{R}$ and the cases involving boundaries will be discussed in a future work. Consider the Cauchy problem defined by \eqref{PDE-T} and the initial conditions
\begin{equation}\label{ic}
    S(x, 0 ) = S_{0}(x), \quad S_{t}(x, 0) = S_{1}(x),
\end{equation}
where $S_{0}$ and $S_{1}$ are chosen suitably. In this work our aim is to prove that,  for initial data in appropriate function spaces and appropriate forms of $h$ appearing in the constitutive relation, the Cauchy problem  \eqref{PDE-T}-\eqref{ic} is locally well-posed in time.

We now define a new variable $\omega(x, t)$ which is the sum of the strain and strain rate: $\omega = \epsilon + \nu \epsilon_{t}$. Furthermore, we  assume that the function $h(\cdot)$ in \eqref{Raj-model} is sufficiently smooth and strictly increasing, i.e., $h'(z) > 0$ for any $z \in \mathbb{R}$. Note that this condition naturally arises in the linear theory since  in that case $h(z) = z$ and  $h'(z) = 1 >0$. Therefore, this is a mechanically meaningful assumption. As a result we arrive at a rate-type viscoelastic model where $S$ is a nonlinear function of $\omega$. We can rewrite \eqref{Raj-model} in the form
\begin{equation}\label{inverse}
    S = g(\omega),
\end{equation}
where $g$ is the inverse function of $h$ satisfying $g(0)=0$. As a consequence of the invertibility we observe that $g$ is a sufficiently smooth function and $g'(z) > 0$ for any $z \in \mathbb{R}$.  In other words, the stress $S$ is a smooth function of $\omega$ and it is strictly increasing. With this modification of the dependent variable from $S$ to $\omega$,  \eqref{PDE-T} becomes
\begin{equation}\label{nonlinear-eqn}
    \omega_{tt} = g(\omega)_{xx} + \nu g(\omega)_{xxt}.
\end{equation}
Similarly, the initial conditions \eqref{ic}   take the following form
\begin{equation}\label{icw}
    \omega(x, 0 ) = \omega_{0}(x), \quad \omega_{t}(x, 0) = \omega_{1}(x)
\end{equation}
for $\omega$, where we define $\omega_{0}$ and $\omega_{1}$ as
\begin{equation}\label{icwd}
    \omega_{0}(x)=h(S_{0}(x)), \quad \omega_{1}(x) = h'(S_{0}(x))S_{1}(x).
\end{equation}
The nature of the solution of the initial-value problem \eqref{nonlinear-eqn}-\eqref{icw} will depend on the outcome of the competition between the dissipative response of the material, and the wave steepening resulting from the nonlinear elastic response. It is worth mentioning that since $\omega = \epsilon + \nu \epsilon_{t}$, each of the terms on the right-hand side of \eqref{nonlinear-eqn} involves elastic and dissipative effects. Since $\omega$ involves both the strain and the strain-rate, again the first and second terms on the right-hand side of \eqref{nonlinear-eqn} do not represent purely elastic and purely dissipative effects, respectively.

Equation \eqref{nonlinear-eqn} can be converted into a system of equations 
\begin{eqnarray}
    && \omega_{t} - \phi_{x}  = 0  \label{omega-phi-1} \\
    &&\phi_{t} - \sigma(\omega, \omega_{t})_{x} = 0,  \label{omega-phi-2}
\end{eqnarray}
where  $\sigma = g(\omega) + \nu g(\omega)_{t}$ (i.e., $\sigma = S + \nu S_{t}$). Notice that $\omega \to u_{x}$, $\phi \rightarrow u_{t}$ and $\sigma \to S$ as $\nu \to 0^{+}$. Since we assume that $g'(\omega) > 0$, the system \eqref{omega-phi-1}-\eqref{omega-phi-2} for $\omega$ and $\phi$ is a quasilinear hyperbolic system with dissipation.  Equation \eqref{omega-phi-1} imposes that $\omega$ and $\phi$ derive from a ``potential" function so that $\omega=\eta_{x}$ and $\phi=\eta_{t}$. So, in order to write \eqref{nonlinear-eqn} in the divergence form we use the potential function $ \eta(x, t) = \int_{-\infty}^{x} \omega(y, t) dy$. Furthermore we assume that the initial data $\omega_{0}$ and $\omega_{1}$ for \eqref{nonlinear-eqn} are in the form  $\omega_{0}=(\eta_{0})_{x}$ and $\omega_{1}=(\eta_{1})_{x}$. Substituting these expressions into \eqref{nonlinear-eqn} and \eqref{icw} and cancelling out one space derivative, we get the Cauchy problem
\begin{eqnarray}
    && \eta_{tt} = g(\eta_{x})_{x} + \nu g(\eta_{x})_{xt}, \qquad x \in \mathbb{R}, \quad  t > 0  \label{NL-eqn} \\
    && \eta(x, 0 ) = \eta_{0}(x), \quad \eta_{t}(x, 0) = \eta_{1}(x),\qquad x \in \mathbb{R}.  \label{ic-eta}
\end{eqnarray}
This formal derivation holds if the terms tend to zero as $x \rightarrow \pm\infty$. This is assured when we show later that $\eta$ and $\eta_{t}$ belong to appropriate Sobolev spaces. We are interested in smooth solutions to \eqref{NL-eqn}-\eqref{ic-eta} corresponding to smooth initial data. The analysis in the next section will be based on the assumptions $\eta_{0}(\pm \infty) =0$ and $\eta_{1}(\pm \infty) = 0$. See Appendix for the mechanical interpretation and implication of these conditions.

We use the standard notations for Lebesgue and Sobolev spaces as well as the spaces of continuous functions.  For the rest of the paper, $H^{s} = H^{s}(\mathbb{R})$ denotes the $L^{2}$-based Sobolev space of order $s$ on $\mathbb{R}$ with the usual norm $\| z \|_{H^{s}} = \left(\int_{\mathbb{R}}(1+\xi^{2})^{s} |\widehat{z}(\xi)|^{2} d\xi\right)^{1/2} $, where the symbol \,$\widehat{}$\, denotes the Fourier transform. Similarly, we denote the norm in the space $L^{\infty}= L^{\infty}(\mathbb{R})$ as $\|z\|_{L^{\infty}} = \underset{x \in \mathbb{R}}{\mathrm{ess \,sup}}\,\, | z(x) |$. With $C^{k}(\mathbb{R})$, $k = 0, 1, 2, \ldots$ we represent the set of functions on $\mathbb{R}$ that are $k$-times continuously differentiable. Also $C([0, T]; H^{s})$ denotes the space of all $H^{s}$-valued functions $z$ on the interval $[0, T]$ of real numbers such that $z$ is strongly continuous on $[0, T]$. Similarly, $C^{1}([0, T]; H^{s})$ denotes the space of all continuously differentiable $H^{s}$-valued functions. Finally, $C$ denotes a generic positive constant.

\section{Local existence}\label{loc-existence}
\setcounter{equation}{0}

It is well-known that the equations of nonlinear elastodynamics do not admit, in general, global-in-time smooth solutions even for smooth initial data. Moreover, smooth solutions usually exist on a finite time interval only. However, in nonlinear viscoelasticity, the dissipation induced by the viscoelastic effect makes the problem more tractable and allows one to show that local-in-time smooth solutions exist over a longer time interval than that in the elastic case, or that, under certain assumptions, smooth solutions exist globally in time. The aim in this work, as a first step for the qualitative analysis, is to investigate local-in-time existence of solutions for the equations \eqref{NL-eqn}-\eqref{ic-eta} governing the motion of one-dimensional strain-limiting viscoelastic media. We, first, put \eqref{NL-eqn} in a form for which we will be able to define a nonlinear operator, and then use some techniques from elliptic operator theory \cite{Pecher}.

We now visualize \eqref{NL-eqn} as a parabolic type equation for $\eta_{t}$ with source term $g(\eta_{x})_{x}$. To get a bounded operator we add $\eta_{t}$ to both sides of \eqref{NL-eqn}. Explicitly writing the time derivative of the nonlinear term on the right-hand side of \eqref{NL-eqn} we get
\begin{equation}\label{eta-eqn}
\eta_{tt}  +\eta_{t} - \nu \Big(g'(\eta_{x}) \eta_{xt}\Big)_{x} = g(\eta_{x})_{x} + \eta_{t}.
\end{equation}
Now, letting $\eta_{t} = \phi$ we obtain the system
\begin{eqnarray}
&& \eta_{t} = \phi, \label{first-A-sys} \\
&& \phi_{t} + A(t) \phi = G(t) + F(t),  \label{second-A-sys} 
\end{eqnarray}
where 
\begin{equation}\label{A-F}
A(t) = 1 - \nu D_{x}(g'(\eta_{x}) D_{x}), \quad G(t) = g(\eta_{x})_{x}, \quad F(t) =\eta_{t}.
\end{equation}
Note that, we have divided the right-hand side of \eqref{eta-eqn} into two parts as $G(t)$ and $F(t)$ in \eqref{second-A-sys} since we will need to obtain estimates in different spaces.

\subsection{Properties of the operator $A(t)$}

In general, denoting $g'(\eta_{x}) = a(x, t),$ the second-order differential operator $A(t)$ can be written in divergence form as
\begin{equation}\label{A}
A(t) \phi = - \nu (a(x, t)\phi_{x})_{x} + \phi.
\end{equation}
From now on we assume that $a(x, t) \in C^{1}([0, T]; H^{s-1})$ for some $T > 0$. 
Using the product estimate
\begin{equation}\label{product-estimate}
\|u v\|_{H^s}\leq C \left(\|u\|_{L^{\infty}} \| v\|_{H^{s}} + \|u\|_{H^{s}} \|v\|_{L^{\infty}}\right)
\end{equation}
for $u, v \in H^{s} \cap L^{\infty}$, and the Sobolev embedding, we can show that $\|A u\|_{H^{s-2}} \leq C \|u\|_{H^{s}}$ for $s > 5/2$.

For an interval $[0, T]$, the operator $A: H^{s} \to H^{s-2}$ will be uniformly elliptic in the spatial variable $x$ if there exists a constant $\bar{\theta} > 0$ such that $\nu a(x, t) \geq \bar{\theta}$ for all $x \in \mathbb{R}$ and $t \in [0, T]$.  Throughout the rest of the paper, we will assume that there exists a constant $\theta > 0$ such that
\begin{equation}\label{pos}
\underset{x \in \mathbb{R}, t \in [0, T]}{\inf} \,\,a(x, t) = \theta > 0 .
\end{equation}
Since $\nu > 0$, this assumption guarantees the uniform ellipticity of $A$. Also, it is important to note that we added the factor $1$ to the operator in \eqref{eta-eqn} so that we are able to invert $A(t)$. This makes it possible that all calculations done in a bounded domain also hold in $\mathbb{R}$.
Below we refer to a general result about regularity of the inverse of elliptic operators in Sobolev spaces. 

\begin{lemma}\label{A-inverse}
Let $s> 5/2$ and $T>0$. Assume that the operator $A(t)$ is given as \eqref{A} with $a(x, t) \in C^{1}([0, T]; H^{s-1})$.  Also assume that \eqref{pos} holds. Then there exists an inverse operator $A^{-1}(t)$ of $A(t)$ which satisfies the estimates
\begin{eqnarray}
&& \|A^{-1}(t) z \|_{H^{s}} \leq C \| z \|_{H^{s-2}}, \label{A-inv}\\
&& \|(A^{-1})_{t}(t) z \|_{H^{s}} \leq C \| z \|_{H^{s-2}}, \label{A-t-inv}
\end{eqnarray}
for all $t \in [0, T]$, where constant $C$ is independent of $t$.
\end{lemma}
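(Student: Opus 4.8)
The plan is to first show that $A(t)$ is a bijection from $H^{s}$ onto $H^{s-2}$ by working at the base Hilbert-space level and then lifting regularity. To $A(t)$ I would associate the bilinear form
\[ B_t[\phi,\psi] = \nu\int_{\mathbb{R}} a(x,t)\,\phi_x\psi_x\,dx + \int_{\mathbb{R}}\phi\psi\,dx \]
on $H^{1}\times H^{1}$. Boundedness of $B_t$ follows from $a(\cdot,t)\in L^{\infty}$ (via the embedding $H^{s-1}\hookrightarrow L^{\infty}$), while the lower bound \eqref{pos} yields coercivity $B_t[\phi,\phi]\ge\min(\nu\theta,1)\,\|\phi\|_{H^1}^{2}$. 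The crucial point here is that the zeroth-order term we added in \eqref{eta-eqn} supplies coercivity \emph{directly}, so no Poincaré inequality is needed and the argument remains valid on the whole line; this is exactly the reason \eqref{A} carries the factor $1$. By Lax--Milgram, for every $z\in L^{2}$ there is a unique weak solution $\phi\in H^{1}$ of $A(t)\phi=z$ with $\|\phi\|_{H^1}\le C\|z\|_{L^2}\le C\|z\|_{H^{s-2}}$, and since $\min(\nu\theta,1)$ is uniform on the compact interval $[0,T]$, the constant is $t$-independent. This gives injectivity and, once regularity is upgraded, surjectivity of $A(t):H^{s}\to H^{s-2}$, hence the existence of $A^{-1}(t)$.

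Next I would prove the elliptic estimate \eqref{A-inv} by bootstrapping regularity from the $H^{1}$ bound just obtained. Writing \eqref{A} out as $-\nu a\phi_{xx}-\nu a_x\phi_x+\phi=z$ and solving for the highest derivative gives
\[ \phi_{xx}=\frac{1}{\nu a}\bigl(\phi-z\bigr)-\frac{a_x}{a}\,\phi_x, \]
which expresses $\phi_{xx}$ through $\phi$, $\phi_x$, $z$ and the coefficients $1/a$ and $a_x/a$; the bound \eqref{pos} makes $1/a$ bounded and as smooth as $a$, and the embedding $H^{s-1}\hookrightarrow C^{1}$ (valid since $s>5/2$) controls $a_x$. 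Substituting the $H^{1}$ estimate into this identity, differentiating it the required number of times, and estimating the right-hand side with the tame product estimate \eqref{product-estimate} together with Sobolev embedding, one raises the regularity of $\phi$ step by step until $\|\phi\|_{H^s}\le C\|z\|_{H^{s-2}}$. Throughout, the constant depends only on $\theta$, $\nu$, and $\sup_{t\in[0,T]}\|a(\cdot,t)\|_{H^{s-1}}$, all finite and $t$-independent because $a\in C^{1}([0,T];H^{s-1})$ on $[0,T]$. I expect this higher-order estimate, carried out uniformly in $t$ and on the unbounded domain, to be the main obstacle: the variable coefficient forces repeated use of the product/Moser-type estimate, and one must track that the threshold $s>5/2$ is precisely what keeps the relevant embeddings ($H^{s-2}\hookrightarrow L^{\infty}$ and $H^{s-1}\hookrightarrow C^{1}$) available at every step.

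Finally, for the time-derivative bound \eqref{A-t-inv} I would differentiate the identity $A(t)A^{-1}(t)=I$ in $t$. Since only the coefficient $a$ depends on time while the added constant term does not, $A_t(t)\phi=-\nu\bigl(a_t(x,t)\phi_x\bigr)_x$, and differentiating yields the operator identity
\[ (A^{-1})_t(t)=-A^{-1}(t)\,A_t(t)\,A^{-1}(t). \]
It then remains to bound the three factors: $A^{-1}(t):H^{s-2}\to H^{s}$ is controlled by \eqref{A-inv}, while $A_t(t):H^{s}\to H^{s-2}$ is bounded by the product estimate \eqref{product-estimate} with norm governed by $\sup_{t\in[0,T]}\|a_t(\cdot,t)\|_{H^{s-1}}$, which is finite precisely because $a\in C^{1}([0,T];H^{s-1})$ forces $a_t\in C^{0}([0,T];H^{s-1})$. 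Chaining the three bounds gives
\[ \|(A^{-1})_t(t)z\|_{H^s}\le C\,\|A_t(t)A^{-1}(t)z\|_{H^{s-2}}\le C\,\|A^{-1}(t)z\|_{H^s}\le C\,\|z\|_{H^{s-2}}, \]
with $C$ independent of $t$, which is \eqref{A-t-inv}. This last step is essentially mechanical once \eqref{A-inv} and the boundedness of $A_t(t)$ are in hand, so the difficulty is concentrated in the uniform elliptic regularity estimate of the previous paragraph.
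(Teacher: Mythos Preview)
Your proposal is correct and in fact supplies considerably more detail than the paper itself, which disposes of the lemma by citing \cite[Sec.~27.3 and Ex.~28.8]{Wloka} for elliptic regularity on bounded domains and its extension to $\mathbb{R}$, remarking only that the added $+1$ in \eqref{A} is what makes the extension possible. Your Lax--Milgram argument at the $H^{1}$ level (with the $+1$ furnishing coercivity in lieu of Poincar\'e), the bootstrap via the product estimate \eqref{product-estimate}, and the operator identity $(A^{-1})_t=-A^{-1}A_tA^{-1}$ for \eqref{A-t-inv} constitute exactly the standard route such a reference would contain, so the approaches coincide in substance.
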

\begin{proof}
See \cite[Sec. 27.3]{Wloka} for a bounded domain in the case of variable coefficient parabolic differential equation, and \cite[Ex. 28.8]{Wloka} for a way to extend it to $\mathbb{R}$, which is possible due to the factor $1$ in the definition of operator $A(t)$.
\end{proof}

\subsection{Local existence for variable coefficient heat equation}\label{Var-Coef-Heat}

We can view \eqref{second-A-sys} with $A(t)$ defined by \eqref{A} as a variable coefficient heat equation. Consider the abstract (inhomogeneous) initial-value problem
\begin{align}
& \phi_{t} + A(t) \phi = f(t), \qquad 0 \leq \tau \leq t \leq T,\label{evol-sys-1} \\
& \phi(\tau)=\phi_{0}. \label{evol-sys-2}
\end{align}
We know local existence of a unique solution for \eqref{evol-sys-1}-\eqref{evol-sys-2} from \cite[Chp. 5]{Pazy}  (see also \cite[Thm. 26.1]{Wloka}  and \cite[Chp. 20]{Hille}) for bounded domains. This result can be extended to $\mathbb{R}$ by using the fact that the operator $A(t)$ is an isomorphism. The existence of such a unique solution  provides us an evolution system $\Phi(t, \tau)$. Moreover, we know that every classical solution $\phi$ is given by
\[\phi(t) = \Phi(t, \tau) \phi_{0} + \int_{\tau}^{t} \Phi(t, r) f(r) dr.\]
The main properties of $\Phi(t, \tau)$ are given in the next result  (see \cite[Chp. 5]{Pazy}).
\begin{lemma}\label{Pazy-thm}
For every $0 \leq \tau \leq t \leq T$, $\Phi(t, \tau)$ is a bounded linear operator on $H^{s}$ and
\begin{equation*}
\begin{array}{lllll}
\mathrm{(i)}\,\,\Phi(t, t) = \text{Id},\qquad \Phi(t, \tau) = \Phi(t, r) \Phi(r, \tau)\quad \text{for}\quad 0 \leq \tau \leq r \leq  t \leq T, \\
\mathrm{(ii)}\,\,(t, \tau) \to \Phi(t, \tau)\,\,\text{is strongly continuous for} \,\,\,0 \leq \tau \leq t \leq T,\\
\mathrm{(iii)}\,\, \Phi_{t}(t, \tau)= A(t) \Phi(t, \tau) \quad \text{for}\quad 0 \leq \tau \leq t \leq T \\
\mathrm{(iv)}\,\,\Phi_{\tau}(t, \tau) = -\Phi(t, \tau) A(\tau) \quad \text{for}\quad 0 \leq \tau \leq t \leq T.
\end{array}
\end{equation*}
\end{lemma}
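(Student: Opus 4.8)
The plan is to recognize Lemma~\ref{Pazy-thm} as the specialization, to our concrete family $\{A(t)\}$, of the general theory of parabolic evolution systems (the Tanabe--Sobolevskii construction presented in \cite[Chp.~5]{Pazy}). Accordingly the whole argument reduces to verifying that $A(t)$ satisfies the two structural hypotheses under which that theory produces an evolution operator $\Phi(t,\tau)$ together with the asserted properties; once these are checked, (i)--(iv) follow by quoting the abstract theorem.

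The first hypothesis is that, for each fixed $t$, $-A(t)$ generates an analytic semigroup on $H^{s-2}$ with time-independent domain $D=H^{s}$, and that the resolvents satisfy a sectorial bound $\|(\lambda I+A(t))^{-1}\|\le C/(1+|\lambda|)$ uniformly in $t\in[0,T]$. I would obtain this from the uniform ellipticity~\eqref{pos}: since $\nu a(x,t)\ge\nu\theta>0$ uniformly, $A(t)$ is a uniformly elliptic second-order operator carrying the sectorial resolvent estimate standard for such operators, while the factor $1$ added in~\eqref{eta-eqn} together with Lemma~\ref{A-inverse} guarantees $0\in\rho(A(t))$ with the invertibility bound~\eqref{A-inv} holding \emph{on all of} $\mathbb{R}$. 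The second hypothesis is the H\"older (here even Lipschitz) dependence of $A(t)$ on $t$, in the sense
\[
\big\|\big(A(t)-A(s)\big)A(\tau)^{-1}z\big\|_{H^{s-2}}\le C\,|t-s|\,\|z\|_{H^{s-2}}.
\]
Since $A(t)-A(s)=-\nu D_{x}\big((a(\cdot,t)-a(\cdot,s))D_{x}\cdot\big)$, I would first map $z$ to $A(\tau)^{-1}z\in H^{s}$ using~\eqref{A-inv}, then apply the product estimate~\eqref{product-estimate} to the coefficient difference, which is Lipschitz in $t$ because $a\in C^{1}([0,T];H^{s-1})$; the companion bound~\eqref{A-t-inv} supplies the control needed for the $\tau$-regularity.

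With both hypotheses in hand, the abstract construction yields $\Phi(t,\tau)$ through the frozen-coefficient ansatz $e^{-(t-\tau)A(\tau)}$ corrected by a Volterra term solved by successive approximation. Property~(i), namely $\Phi(t,t)=\mathrm{Id}$ and the cocycle identity $\Phi(t,\tau)=\Phi(t,r)\Phi(r,\tau)$, follows from the initial condition together with uniqueness of solutions to~\eqref{evol-sys-1}--\eqref{evol-sys-2}; strong continuity~(ii) follows from continuity of $t\mapsto A(t)$ and analyticity of the generated semigroups; and the forward equation~(iii), $\Phi_{t}=A(t)\Phi$, is the defining evolution property. I expect the main obstacle to be twofold. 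First, every resolvent and semigroup estimate must be justified on the unbounded domain $\mathbb{R}$ rather than on a bounded domain; this is exactly where the factor $1$ and the resulting isomorphism $A(t):H^{s}\to H^{s-2}$ of Lemma~\ref{A-inverse} are essential, since they let the bounded-domain arguments of \cite[Chp.~5]{Pazy} transfer verbatim. Second, the backward equation~(iv), $\Phi_{\tau}=-\Phi\,A(\tau)$, is genuinely more delicate than~(iii): it requires that $\Phi(t,\tau)$ map $D(A(\tau))$ appropriately and that the $\tau$-derivative survive the integral remainder of the parametrix expansion, which is precisely where the Lipschitz-in-$\tau$ control furnished by~\eqref{A-t-inv} is used.
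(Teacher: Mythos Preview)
Your proposal is correct and aligned with the paper's approach: the paper does not supply an independent proof of this lemma but simply records it as a citation of the abstract parabolic evolution-system theory in \cite[Chp.~5]{Pazy}, exactly as you do. Your additional care in spelling out which structural hypotheses of the Tanabe--Sobolevskii construction must be checked (uniform sectoriality via~\eqref{pos} and Lemma~\ref{A-inverse}, and Lipschitz $t$-dependence via $a\in C^{1}([0,T];H^{s-1})$) is more than the paper provides, but is precisely the verification that makes the citation legitimate.
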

Using these tools we now prove our main estimate for the solution of the initial-value problem defined by \eqref{second-A-sys} and $\phi(0) = \phi_{0}$.
\begin{proposition}\label{exist-heat}
Let $s > 5/2$, $T > 0$. Assume that $A$ is defined by \eqref{A} and that \eqref{pos} holds with $a(x, t) \in C^{1}([0, T]; H^{s-1})$.  Also assume that  $G \in  C^{1}([0, T]; H^{s-2})$  and $F \in  C([0, T]; H^{s})$. Then, equation \eqref{second-A-sys} with $\phi(0) = \phi_{0} \in H^{s}$ has a unique solution $\phi \in C([0, T]; H^{s})$  satisfying the estimate
\begin{equation}\label{u-est}
\begin{split}
 \|\phi(t)\|_{H^{s}} & \leq C\bigg( \|\phi_{0}\|_{H^{s}} +  \|G(0)\|_{H^{s-2}}  \\
 & \qquad  + \int_{0}^{t} \Big(\| G(\tau) \|_{H^{s-2}} + \|G_{\tau}(\tau) \|_{H^{s-2}} +  \| F(\tau)\|_{H^{s}}\Big) d\tau  \bigg) 
\end{split}
\end{equation}
for $0 \leq t \leq T$, where $C$ is a constant independent of $t$.
\end{proposition}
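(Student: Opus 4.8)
The plan is to start from the variation-of-constants representation recalled before Lemma~\ref{Pazy-thm},
\[
\phi(t) = \Phi(t,0)\phi_{0} + \int_{0}^{t} \Phi(t,r)\,\big(G(r) + F(r)\big)\,dr,
\]
whose existence and uniqueness in $C([0,T];H^{s})$ is furnished by the abstract parabolic theory for the evolution system $\Phi(t,\tau)$ under the uniform ellipticity \eqref{pos}. Since $\Phi(t,\tau)$ is a uniformly bounded family on $H^{s}$ for $0\le\tau\le t\le T$ (Lemma~\ref{Pazy-thm}), the first term is controlled by $C\|\phi_{0}\|_{H^{s}}$, and because $F\in C([0,T];H^{s})$ the $F$-part of the integral is estimated directly as $\int_{0}^{t}\|\Phi(t,r)F(r)\|_{H^{s}}\,dr \le C\int_{0}^{t}\|F(r)\|_{H^{s}}\,dr$. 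The only delicate contribution is the $G$-part.

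The difficulty is that $G$ lives only in $H^{s-2}$, so $\|\Phi(t,r)G(r)\|_{H^{s}}$ cannot be bounded by $\|G(r)\|_{H^{s}}$. To recover the two missing derivatives I would use property (iv) of Lemma~\ref{Pazy-thm}, $\Phi_{r}(t,r)=-\Phi(t,r)A(r)$, together with the invertibility of $A(r)$ from Lemma~\ref{A-inverse}. Writing $G(r)=A(r)A^{-1}(r)G(r)$ gives $\Phi(t,r)G(r)=-\Phi_{r}(t,r)A^{-1}(r)G(r)$, and integrating by parts in $r$ (using $\Phi(t,t)=\mathrm{Id}$ for the boundary term at $r=t$) yields
\[
\int_{0}^{t}\Phi(t,r)G(r)\,dr = -A^{-1}(t)G(t) + \Phi(t,0)A^{-1}(0)G(0) + \int_{0}^{t}\Phi(t,r)\,\frac{d}{dr}\big(A^{-1}(r)G(r)\big)\,dr.
\]

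It then remains to bound the three pieces in $H^{s}$. The first two follow from \eqref{A-inv} and the uniform $H^{s}$-boundedness of $\Phi$: $\|A^{-1}(t)G(t)\|_{H^{s}}\le C\|G(t)\|_{H^{s-2}}$ and $\|\Phi(t,0)A^{-1}(0)G(0)\|_{H^{s}}\le C\|G(0)\|_{H^{s-2}}$. For the integral I would expand $\frac{d}{dr}\big(A^{-1}(r)G(r)\big)=(A^{-1})_{r}(r)G(r)+A^{-1}(r)G_{r}(r)$ and apply \eqref{A-t-inv} and \eqref{A-inv} respectively, obtaining $\big\|\frac{d}{dr}(A^{-1}(r)G(r))\big\|_{H^{s}}\le C\big(\|G(r)\|_{H^{s-2}}+\|G_{r}(r)\|_{H^{s-2}}\big)$; here the hypothesis $G\in C^{1}([0,T];H^{s-2})$ is exactly what makes $G_{r}$ available and $r\mapsto A^{-1}(r)G(r)$ continuously differentiable. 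Summing the contributions produces the claimed estimate \eqref{u-est}.

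The main obstacle is precisely this loss-of-derivatives in the $G$-term; the integration-by-parts device that trades two spatial derivatives for a single time derivative (paid for by $A^{-1}$) is the heart of the argument. One must also verify that the boundary term at $r=t$ is nonsingular, which it is because $\Phi(t,t)$ is the identity rather than a genuine parabolic smoothing factor, and that the splitting of the source into $G$ and $F$ places each term in the space where the relevant estimate (the $H^{s-2}$ bound via $A^{-1}$ for $G$, the direct $H^{s}$ bound for $F$) is available.
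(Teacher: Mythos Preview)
Your argument is correct and follows exactly the route taken in the paper: Duhamel representation, direct $H^{s}$ bound on the $F$-piece, and for the $G$-piece the insertion $G=A A^{-1}G$ followed by integration by parts via Lemma~\ref{Pazy-thm}(iv) and the bounds \eqref{A-inv}--\eqref{A-t-inv}. The only discrepancy is that your boundary term at $r=t$ contributes $C\|G(t)\|_{H^{s-2}}$, whereas \eqref{u-est} has only $\|G(0)\|_{H^{s-2}}$ outside the integral; to match the stated estimate you still need to write $G(t)=G(0)+\int_{0}^{t}G_{r}(r)\,dr$ (or, equivalently, mimic the paper and estimate $A^{-1}(t)G(t)-A^{-1}(0)G(0)$ as an integral of its $r$-derivative), which is immediate from $G\in C^{1}([0,T];H^{s-2})$.
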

\begin{proof}
We now know from the above discussion that the solution of the initial-value problem for \eqref{second-A-sys} is given by
\begin{equation}\label{Duhamel-soln}
\phi(t) = \Phi(t, 0) \phi_{0} + \int_{0}^{t} \Phi(t, \tau) G(\tau) d\tau + \int_{0}^{t} \Phi(t, \tau) F(\tau) d\tau,
\end{equation}
where the properties of the evolution system $\Phi(t, \tau)$ are given in Lemma \ref{Pazy-thm}. 
From \eqref{Duhamel-soln} we have
\begin{equation*}
 \|\phi(t)\|_{H^{s}}  \leq C \|\phi_{0}\|_{H^{s}} +\left\| \int_{0}^{t} \Phi(t, \tau) G(\tau) d\tau \right\|_{H^{s}} +\left\| \int_{0}^{t} \Phi(t, \tau) F(\tau) d\tau \right\|_{H^{s}}
\end{equation*}
due to the fact that $\Phi(t, \tau)$ is a bounded operator.  For the last term on the right-hand side we have 
\begin{equation}\label{est-F}
\left\| \int_{0}^{t} \Phi(t, \tau) F(\tau) d\tau \right\|_{H^{s}} \leq C  \int_{0}^{t}\| F(\tau)\|_{H^{s}} d\tau.
\end{equation}
For the second term, by part (iv) of Lemma \ref{Pazy-thm}, we obtain 
\[\left\|\int_{0}^{t} \Phi(t, \tau) A(\tau) A^{-1}(\tau) G(\tau) d\tau\right\|_{H^{s}}  =  \left\|\int_{0}^{t}  \Big(- \partial_{\tau}\Phi(t, \tau)\Big) \Big(A^{-1}(\tau) G(\tau)\Big) d\tau\right\|_{H^{s}} .\]
Now, integration by parts and boundedness of $\Phi$ give
\begin{eqnarray}
&& \left\|\int_{0}^{t}  \Big(- \partial_{\tau}\Phi(t, \tau)\Big) \Big(A^{-1}(\tau) G(\tau)\Big) d\tau\right\|_{H^{s}} \nonumber \\
& & \qquad \qquad = \left\|  \int_{0}^{t} \Phi(t, \tau) \partial_{\tau}\Big(A^{-1}(\tau) G(\tau)\Big)d\tau - \Big(\Phi(t, \tau) A^{-1}(\tau) G(\tau)\Big)\Big|_{\tau=0}^{\tau=t} \right\|_{H^{s}} \nonumber \\
& &\qquad \qquad  \leq  C \int_{0}^{t} \left\| (A^{-1})_{\tau}(\tau) G(\tau) + A^{-1}(\tau) G_{\tau}(\tau) \right\|_{H^{s}} d\tau  \nonumber  \\
& &\qquad  \qquad \qquad \qquad \qquad + \big\|\Phi(t, t) A^{-1}(t) G(t) - \Phi(t, 0) A^{-1}(0) G(0)\big\|_{H^{s}} .\label{est-UG}
\end{eqnarray}
For the last term, we can add and subtract $\Phi(t, t) A^{-1}(0)G(0)$ and use boundedness of $\Phi$ again to write
\begin{eqnarray}
&&\!\!\!\!\!\!\!\!\!\!\!\!\!\!\!\!\!\!\!\!\!\!\!\!\!\!\big\|\Phi(t, t) A^{-1}(t) G(t) - \Phi(t, 0) A^{-1}(0) G(0)\big\|_{H^{s}}  \nonumber \\
&&\!\!\!\!\!\!\!\!\!\!\!\!\!\!\!\!\!\!\!\!\!\!\!\!\!= \big\|\Phi(t, t) A^{-1}(t) G(t) - \Phi(t, t) A^{-1}(0)G(0) \nonumber  \\
&& \qquad \qquad \qquad  \quad  +  \Phi(t, t) A^{-1}(0)G(0)  - \Phi(t, 0) A^{-1}(0) G(0)\big\|_{H^{s}}  \nonumber \\
&&\!\!\!\!\!\!\!\!\!\!\!\!\!\!\!\!\!\!\!\!\!\!\!\!\!\leq C \big\| A^{-1}(t)G(t)-A^{-1}(0) G(0)\big\|_{H^{s}} \nonumber  \\
&& \qquad \qquad \qquad  \quad  + \big\|\big(\Phi(t, t) - \Phi(t, 0)\big)A^{-1}(0) G(0) \big\|_{H^{s}}. \label{est-UG-2}
\end{eqnarray}
Also we have
\begin{eqnarray}
&& \!\!\!\!\!\!\!\!\!\!\!\!\!\!\!\!\!\!\!\!\!\!\!\!\!\!\!\!\!\!\!\! \big\| A^{-1}(t)G(t) - A^{-1}(0) G(0)\big\|_{H^{s}}   =\left\| \int_{0}^{t} \frac{\partial}{\partial \tau} \big(A^{-1}(\tau)G(\tau)\big) d\tau \right\|_{H^{s}} \nonumber \\
&& \qquad \qquad \qquad \leq   \int_{0}^{t} \big\| (A^{-1})_{\tau}(\tau) G(\tau) + A^{-1}(\tau) G_{\tau}(\tau)\big\|_{H^{s}}  d\tau, \label{est-UG-3}
\end{eqnarray}
and
\begin{equation}\label{est-UG-4}
\big\|\big(\Phi(t, t) - \Phi(t, 0)\big)A^{-1}(0) G(0) \big\|_{H^{s}} \leq  C \big\|A^{-1}(0) G(0)\big\|_{H^{s}}.
\end{equation}
Combining estimates \eqref{est-F}, \eqref{est-UG}, \eqref{est-UG-2}, \eqref{est-UG-3} and \eqref{est-UG-4} we obtain
\begin{equation*}
\begin{split}
 \|\phi(t)\|_{H^{s}} & \leq C \|\phi_{0}\|_{H^{s}} + C  \int_{0}^{t} \Big(\big\|(A^{-1})_{\tau}(\tau) G(\tau) \big\|_{H^{s}}+  \big\|A^{-1}(\tau) G_{\tau}(\tau) \big\|_{H^{s}}\Big) d\tau \\
 & \qquad  \qquad + C \big\|A^{-1}(0) G(0)\big\|_{H^{s}} + C \int_{0}^{t}\big\| F(\tau)\big\|_{H^{s}} d\tau.
\end{split}
\end{equation*}
Applying \eqref{A-inv} and \eqref{A-t-inv} we get \eqref{u-est} as required.
\end{proof}

Using \eqref{first-A-sys} and \eqref{u-est} we can now prove estimates for $\eta$ and $\eta_{t}$. 

\begin{corollary}\label{cor-heat}
Let $s > 5/2 $, $T > 0$. Assume that $A$ is defined by \eqref{A} and that \eqref{pos} holds with $a(x, t) \in C^{1}([0, T]; H^{s-1})$.  Also assume that  $G \in  C^{1}([0, T]; H^{s-2})$  and $F \in  C([0, T]; H^{s})$. Then there exists a solution $(\phi, \eta) \in \big(C([0, T]; H^{s}), C^{1}([0, T]; H^{s})\big)$ to \eqref{first-A-sys}-\eqref{second-A-sys} with $\eta(0)=\eta_{0}$ and $\phi(0)=\phi_{0}=\eta_{1}$ and satisfying the following estimates
\begin{align}
 \|\eta(t)\|_{H^{s}} & \leq \|\eta_{0}\|_{H^{s}} + C T\Big(\|\eta_{1}\|_{H^{s}} + \|G(0)\|_{H^{s-2}} \Big) \nonumber\\
 & \qquad + C\,T \,\int_{0}^{t} \Big(\|G(\tau)\|_{H^{s-2}} + \left\| G_{\tau}(\tau)\right\|_{H^{s-2}} + \|F(\tau)\|_{H^{s}}\Big) d\tau \label{eta-est}\\
 \|\eta_{t}(t)\|_{H^{s}} & \leq C \bigg( \|\eta_{1}\|_{H^{s}} + \|G(0)\|_{H^{s-2}}  \nonumber \\
 & \qquad   + \int_{0}^{t} \Big(\| G(\tau) \|_{H^{s-2}} + \|G_{\tau}(\tau) \|_{H^{s-2}} +  \| F(\tau)\|_{H^{s}}\Big) d\tau  \bigg) \label{eta-t-est}
 \end{align}
for $0 \leq t \leq T$, where $C$ is a constant.
\end{corollary}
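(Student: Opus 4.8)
The plan is to derive both estimates directly from Proposition~\ref{exist-heat}, exploiting the relation $\eta_t = \phi$ from \eqref{first-A-sys}. First I would apply Proposition~\ref{exist-heat} with initial datum $\phi_0 = \eta_1 \in H^s$; since the hypotheses on $A$, $G$ and $F$ are identical to those of the corollary, this produces a unique solution $\phi \in C([0,T]; H^s)$ of \eqref{second-A-sys} together with the bound \eqref{u-est}. Because $\eta_t = \phi$, the estimate \eqref{eta-t-est} for $\|\eta_t(t)\|_{H^s}$ is then nothing but \eqref{u-est} rewritten with $\phi_0 = \eta_1$, so that half of the statement requires no further argument.

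To recover $\eta$ itself I would integrate \eqref{first-A-sys} in time, writing $\eta(t) = \eta_0 + \int_0^t \phi(\tau)\, d\tau$. Since $\phi$ is strongly continuous with values in $H^s$, this Bochner integral is well defined, $\eta(0) = \eta_0$, and $\eta_t = \phi$ is continuous; hence $\eta \in C^1([0,T]; H^s)$, which establishes the claimed regularity $(\phi, \eta) \in \big(C([0,T]; H^s),\, C^1([0,T]; H^s)\big)$.

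For the bound \eqref{eta-est} I would begin from the triangle inequality
\[
\|\eta(t)\|_{H^s} \le \|\eta_0\|_{H^s} + \int_0^t \|\phi(\tau)\|_{H^s}\, d\tau,
\]
which already accounts for the coefficient~$1$ in front of $\|\eta_0\|_{H^s}$, and then insert \eqref{u-est} for $\|\phi(\tau)\|_{H^s}$ underneath the integral sign. The contributions $\|\eta_1\|_{H^s}$ and $\|G(0)\|_{H^{s-2}}$, being constant in $\tau$, integrate to a factor $t \le T$ and yield the term $CT\big(\|\eta_1\|_{H^s} + \|G(0)\|_{H^{s-2}}\big)$. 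The remaining piece is the iterated integral $\int_0^t \int_0^\tau (\,\cdots\,)\, dr\, d\tau$, whose integrand is a sum of Sobolev norms and therefore nonnegative.

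The only step that calls for a small observation is the treatment of this double integral. Since the inner integral $\int_0^\tau (\,\cdots\,)\, dr$ is nondecreasing in $\tau$, I would bound it above by $\int_0^t (\,\cdots\,)\, dr$ and pull it outside the $d\tau$ integration, which leaves a factor $t \le T$ in front and reproduces exactly the $CT \int_0^t (\,\cdots\,)\, dr$ term of \eqref{eta-est}. I expect no genuine obstacle here, as the argument is essentially a direct substitution; the one point to watch is that the monotonicity estimate must be applied before enlarging the upper limit, so that a uniform factor of $T$, rather than a $t$-dependent quantity, is extracted on all of $[0,T]$.
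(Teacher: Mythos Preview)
Your proposal is correct and follows essentially the same approach as the paper's proof: define $\eta(t)=\eta_{0}+\int_{0}^{t}\phi(\tau)\,d\tau$, apply the triangle inequality, and insert \eqref{u-est}, while \eqref{eta-t-est} is simply \eqref{u-est} with $\phi_{0}=\eta_{1}$. Your explicit treatment of the double integral via monotonicity and the regularity remark for $\eta\in C^{1}([0,T];H^{s})$ merely spell out details the paper leaves implicit.
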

\begin{proof}
Since
\begin{equation}\label{eta}
\eta(x, t)= \eta_{0}(x)  + \int_{0}^{t} \phi(x, \tau) d\tau,
\end{equation}
we have 
\[\|\eta(t)\|_{H^{s}} \leq \|\eta_{0}\|_{H^{s}} + \int_{0}^{t} \|\phi(\tau)\|_{H^{s}} d\tau.\]
From  \eqref{u-est} we immediately find \eqref{eta-est}. 
Recalling from \eqref{first-A-sys} that $\phi = \eta_{t}$ we obtain \eqref{eta-t-est} from \eqref{u-est}.
\end{proof}

\subsection{Assumptions about the nonlinearity}

We refer to \cite{Cons-Mol} for the following two lemmas about the nonlinearity.

\begin{lemma}\label{bound-lemma}
Let $g \in C^{\infty}(\mathbb{R})$ with $g(0)= 0$.  If $z \in H^{s}(\mathbb{R}) \cap L^{\infty}(\mathbb{R})$, $ s \geq 0,$ then
\[\|g(z) \|_{H^{s} \cap L^{\infty}} \leq K_{1} \|z\|_{H^{s} \cap L^{\infty}},\]
where $K_{1}$ depends only on $\|z \|_{L^{\infty}}$.
\end{lemma}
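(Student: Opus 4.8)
The plan is to estimate the two pieces of the $H^{s}\cap L^{\infty}$ norm separately, namely $\|g(z)\|_{L^{\infty}}$ and $\|g(z)\|_{H^{s}}$, and in each case to extract a constant depending only on $M := \|z\|_{L^{\infty}}$. The structural facts I will exploit are that $g(0)=0$ and that $g$ is smooth, so that every derivative $g^{(k)}$ is bounded on the compact interval $[-M,M]$ by a constant that depends only on $M$.

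First I would dispose of the $L^{\infty}$ bound. Writing $g(z(x)) = g(z(x)) - g(0) = z(x)\int_{0}^{1} g'(t\,z(x))\,dt$, the pointwise estimate $|g(z(x))| \leq \big(\sup_{|y|\leq M}|g'(y)|\big)\,|z(x)|$ follows at once, so that $\|g(z)\|_{L^{\infty}} \leq C(M)\,\|z\|_{L^{\infty}}$ with $C(M)$ depending only on $M$.

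The substantive part is the $H^{s}$ estimate, and I would first treat integer $s=m$. Using the Fa\`a di Bruno formula, $\partial_{x}^{m}(g(z))$ is a finite sum of terms of the form $g^{(k)}(z)\prod_{j}(\partial_{x}^{j}z)^{k_{j}}$ with $\sum_{j} j\,k_{j} = m$ and $k=\sum_{j} k_{j}\geq 1$. Each factor $g^{(k)}(z)$ is bounded in $L^{\infty}$ by a constant depending only on $M$, and the remaining product is estimated in $L^{2}$ by combining H\"older's inequality with the Gagliardo--Nirenberg interpolation inequalities $\|\partial_{x}^{j}z\|_{L^{2m/j}} \leq C\,\|z\|_{L^{\infty}}^{1-j/m}\,\|\partial_{x}^{m}z\|_{L^{2}}^{j/m}$; since the total number of derivatives in each product is exactly $m$, the exponents sum correctly and one obtains $\|\partial_{x}^{m}(g(z))\|_{L^{2}} \leq C(M)\,\|z\|_{H^{m}}$. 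Collecting this with the lower-order terms (the $j=0$ contribution being handled as in the $L^{\infty}$ step) yields $\|g(z)\|_{H^{m}} \leq C(M)\,\|z\|_{H^{m}}$. For non-integer $s$ I would then interpolate between consecutive integer orders, or equivalently run a Littlewood--Paley decomposition and apply the fractional Leibniz rule, which upgrades the integer estimate to all real $s\geq 0$; this is the content of the result quoted from \cite{Cons-Mol}. Combining the two pieces and using $\|z\|_{L^{\infty}},\,\|z\|_{H^{s}} \leq \|z\|_{H^{s}\cap L^{\infty}}$ gives the claim with $K_{1}=C(M)$.

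I expect the main obstacle to be twofold. First, one must ensure that the constant genuinely depends only on $\|z\|_{L^{\infty}}$ and not on any higher Sobolev norm of $z$; this is precisely what the Gagliardo--Nirenberg scaling buys us, since it trades every intermediate derivative for a power of $\|z\|_{L^{\infty}}$ together with a single top-order factor $\|\partial_{x}^{m}z\|_{L^{2}}$. Second, one must handle the fractional range of $s$, where the clean Fa\`a di Bruno expansion is unavailable and one is forced to rely on interpolation or on harmonic-analytic tools.
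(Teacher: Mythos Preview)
The paper does not supply its own proof of this lemma; it simply records the statement and refers the reader to \cite{Cons-Mol}. Your proposal is therefore not competing with an argument in the paper but rather filling in what the authors outsourced, and what you have written is the standard Moser-type composition estimate. The $L^{\infty}$ bound and the integer-order $H^{m}$ bound via Fa\`a di Bruno combined with Gagliardo--Nirenberg are correct as stated, and the observation that the exponents conspire so that only $\|z\|_{L^{\infty}}$ enters the constant is exactly the point.

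One small caveat: the phrase ``interpolate between consecutive integer orders'' is risky if taken literally, since $z\mapsto g(z)$ is nonlinear and the classical interpolation theorems apply to linear operators; there is no direct way to conclude the fractional $H^{s}$ bound from the integer $H^{m}$ bounds by abstract interpolation alone. Your alternative route through a Littlewood--Paley/paraproduct decomposition (or, equivalently, writing $g(z)=z\int_{0}^{1}g'(tz)\,dt$ and invoking the fractional Leibniz rule on the product) is precisely how the fractional case is handled in the references, so the argument stands once that route is taken.
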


\begin{lemma}\label{llc-lemma}
Let $g \in C^{\infty}(\mathbb{R})$. If $z_{1}, z_{2} \in  H^{s}(\mathbb{R}) \cap L^{\infty}(\mathbb{R})$, $s \geq 0$, then
\[\|g(z_{1}) - g(z_{2}) \|_{H^{s} \cap L^{\infty}} \leq K_{2} \|z_{1} - z_{2}\|_{H^{s} \cap L^{\infty}},\]
where $K_{2}$ depends on $\|z_{1}\|_{H^{s} \cap L^{\infty}}$ and $\|z_{2}\|_{H^{s} \cap L^{\infty}}$.
\end{lemma}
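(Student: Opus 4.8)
The plan is to reduce the difference $g(z_1)-g(z_2)$ to a product of a bounded multiplier and the increment $w := z_1 - z_2$, and then to invoke the product estimate \eqref{product-estimate} together with Lemma \ref{bound-lemma}. First I would write, by the fundamental theorem of calculus applied along the segment joining $z_2$ to $z_1$,
\[
g(z_1) - g(z_2) = M\, w, \qquad M(x) := \int_0^1 g'\big(z_2(x) + \theta w(x)\big)\, d\theta,
\]
so that everything is recast as estimating the pointwise product $Mw$ in $H^s \cap L^\infty$. The $L^\infty$ bound is then immediate: for each $x$ the argument $z_2 + \theta w$ stays in the interval $[-R,R]$ with $R := \|z_1\|_{L^\infty} + \|z_2\|_{L^\infty}$, hence $\|M\|_{L^\infty} \le \sup_{|y|\le R}|g'(y)|$ and $\|g(z_1)-g(z_2)\|_{L^\infty} \le \big(\sup_{|y|\le R}|g'(y)|\big)\,\|w\|_{L^\infty}$.

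For the $H^s$ bound I would apply \eqref{product-estimate} to $Mw$, which requires control of both $\|M\|_{L^\infty}$ and $\|M\|_{H^s}$. The hard part is precisely the latter: $M$ carries a nonzero constant part $g'(0)$ whenever $g'(0)\neq 0$, and constants do not belong to $H^s(\mathbb{R})$, so $\|M\|_{H^s}$ is meaningless as it stands. To get around this I would split $g' = g'(0) + \tilde g$ with $\tilde g := g' - g'(0) \in C^\infty(\mathbb{R})$ and $\tilde g(0)=0$, which induces the decomposition $Mw = g'(0)\,w + N w$, where $N := \int_0^1 \tilde g(z_2 + \theta w)\, d\theta$. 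The first piece is harmless, since $\|g'(0)w\|_{H^s} = |g'(0)|\,\|w\|_{H^s}$.

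For the second piece, the vanishing condition $\tilde g(0)=0$ is exactly what lets me apply Lemma \ref{bound-lemma} to $\tilde g$: for each $\theta$,
\[
\|\tilde g(z_2 + \theta w)\|_{H^s \cap L^\infty} \le \tilde K\,\|z_2 + \theta w\|_{H^s \cap L^\infty} \le \tilde K\big(\|z_1\|_{H^s \cap L^\infty} + \|z_2\|_{H^s \cap L^\infty}\big),
\]
with $\tilde K$ depending only on $R$. Passing the norm inside the $\theta$-integral (a routine Bochner-integral bound) yields the same estimate for $\|N\|_{H^s \cap L^\infty}$. Then \eqref{product-estimate} gives
\[
\|Nw\|_{H^s} \le C\big(\|N\|_{L^\infty}\|w\|_{H^s} + \|N\|_{H^s}\|w\|_{L^\infty}\big),
\]
and combining this with the $g'(0)w$ term and the $L^\infty$ estimate above produces
\[
\|g(z_1)-g(z_2)\|_{H^s \cap L^\infty} \le K_2\,\|z_1 - z_2\|_{H^s \cap L^\infty},
\]
where $K_2$ collects $|g'(0)|$, $\sup_{|y|\le R}|g'(y)|$ and $\tilde K$, and hence depends only on $\|z_1\|_{H^s \cap L^\infty}$ and $\|z_2\|_{H^s \cap L^\infty}$, as claimed.

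Besides the constant-term subtlety, which I regard as the main obstacle, the only other point needing a line of justification is the continuity of $\theta \mapsto \tilde g(z_2+\theta w)$ into $H^s$ that legitimizes passing the norm through the integral; this follows from yet another application of the integral mean value representation and is otherwise routine.
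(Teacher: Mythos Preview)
Your argument is correct and self-contained: the integral mean-value representation $g(z_1)-g(z_2)=Mw$, the subtraction of $g'(0)$ to force the multiplier into $H^s$, the appeal to Lemma~\ref{bound-lemma} for $\tilde g$, and the final product estimate \eqref{product-estimate} all go through for $s\ge 0$. The Bochner-integrability remark you flag is indeed routine.

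For comparison, the paper does not actually prove this lemma; it simply cites \cite{Cons-Mol} for both Lemma~\ref{bound-lemma} and Lemma~\ref{llc-lemma}. What you have written is essentially the standard proof one finds in such references (Moser-type composition estimates combined with the Kato--Ponce product inequality), so your approach is in line with the literature even though the paper itself offers no argument to compare against.
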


We would like condition \eqref{pos} to hold with $a(x, t) = g'(\eta_{x})$. Since $g'(z)>0$ for all $z \in \mathbb{R}$, we have $g'(0) > 0$. Note that since stress increases with strain near equilibrium, this is a mechanically meaningful condition. Moreover, by continuity of  $g'(\cdot) $, we know that there must exist a constant $\delta > 0$ such that if $|z | \leq \delta$ then $g'(z) \geq \theta >0$. We will make use of this information to state the assumption on $\|\eta_{x}(t)\|_{L^{\infty}}$ below.

\subsection{Linearization} 

\noindent In this section we will use a linearization argument which is in principle the same as the localization technique used for local well-posedness of the initial-value problems  associated with viscoelasticity in many articles. For this, we consider \eqref{eta-eqn} and linearize around a given $v \in C^{1}([0, T]; H^{s})$ satisfying $v(x, 0) = \eta_{0}(x)$ and $v_{t}(x, 0) = \eta_{1}(x)$, to get
\begin{equation}\label{eta-eqn-linear}
\eta_{tt} + \eta_{t} - \nu (g'(v_{x}) \eta_{xt})_{x}= g(v_{x})_{x} + v_{t} .
\end{equation}
Letting $\eta_{t} = \phi$ again, we obtain the linear equation
\begin{equation}\label{A-v-sys}
\phi_{t} + A_{v}(t) \phi = G_{v}(t) + F_{v}(t),
\end{equation}
where 
\begin{equation}\label{A-G-F-v}
A_{v}(t) = 1 - \nu D_{x}(g'(v_{x}) D_{x}), \quad  G_{v}(t) = g(v_{x})_{x}, \quad F_{v}(t) =   v_{t}.
\end{equation}
Now, we can prove the following result.

\begin{proposition}\label{linear}
Let $s > 5/2$. Assume $g \in C^{r+1}$ with $r= [s]+1$. Let $v \in C^{1}([0, T]; H^{s})$ with $v(x, 0) = \eta_{0}(x)$ and $v_{t}(x, 0) = \eta_{1}(x)$, and $\|v_{x}(t)\|_{L^{\infty}}\leq \delta$ for all $t \in [0, T]$, be given. Then, equation \eqref{eta-eqn-linear} with initial data $\eta(x, 0) = \eta_{0} \in H^{s}$, $\eta_{t}(x, 0) = \eta_{1} \in H^{s}$, admits a unique solution $\eta \in C^{1}([0, T]; H^{s})$ satisfying the following estimates
 \begin{align}
  \|\eta(t)\|_{H^{s}}   & \leq  \|\eta_{0}\|_{H^{s}} +  C T \Bigg(K_{1}(\delta) \|\eta_{0}\|_{H^{s}} +  \|\eta_{1}\|_{H^{s}} \nonumber \\
& \qquad \qquad  \qquad +  \int_{0}^{t} \Big( K_{1}(\delta) \|v(\tau)\|_{H^{s}} + \big(K_{2}(\delta) \|v(\tau)\|_{H^{s}} \label{eta-est-v-1} \\
& \qquad \qquad \qquad +  \| g'(0)\|_{H^{s-1}} +1 \big) \|v_{\tau}(\tau)\|_{H^{s}} \Big) d\tau \Bigg), \nonumber  \\
 \|\eta_{t}(t)\|_{H^{s}} & \leq C \Bigg( \|\eta_{1}\|_{H^{s}} + K_{1}(\delta)  \|\eta_{0}\|_{H^{s}} + \int_{0}^{t} \Big( K_{1}(\delta) \|v(\tau)\|_{H^{s}} \nonumber \\
 & \qquad + \big(K_{2}(\delta) \|v(\tau)\|_{H^{s}} +  \| g'(0)\|_{H^{s-1}} + 1\big)\|v_{\tau}(\tau)\|_{H^{s}}  \Big) d\tau \Bigg), \label{eta-est-v-2} 
\end{align}
where $K_{1}(\delta)$ and $K_{2}(\delta)$ are the constants in  Lemma \ref{bound-lemma} and Lemma \ref{llc-lemma}, respectively.
\end{proposition}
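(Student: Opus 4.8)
The plan is to recognize \eqref{A-v-sys} as a concrete instance of the abstract variable-coefficient heat equation already handled in Proposition \ref{exist-heat} and Corollary \ref{cor-heat}, with the specific choices $a(x,t)=g'(v_x)$, $A=A_v$, $G=G_v=g(v_x)_x$ and $F=F_v=v_t$, and then to convert the abstract bounds \eqref{eta-est}--\eqref{eta-t-est} into the explicit estimates \eqref{eta-est-v-1}--\eqref{eta-est-v-2}. First I would check that all hypotheses of Corollary \ref{cor-heat} hold for these choices. The uniform ellipticity \eqref{pos} is immediate: by assumption $\|v_x(t)\|_{L^{\infty}}\le\delta$ for every $t\in[0,T]$, and $\delta$ was chosen so that $|z|\le\delta$ forces $g'(z)\ge\theta>0$; hence $\inf a=\inf g'(v_x)\ge\theta$. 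The regularity requirements $a\in C^{1}([0,T];H^{s-1})$, $G_v\in C^{1}([0,T];H^{s-2})$ and $F_v\in C([0,T];H^{s})$ follow from $v\in C^{1}([0,T];H^{s})$ together with $g\in C^{r+1}$, $r=[s]+1$: indeed $v_x\in C^{1}([0,T];H^{s-1})$, the composition Lemmas \ref{bound-lemma}--\ref{llc-lemma} and the product estimate \eqref{product-estimate} keep $g(v_x)$ and $g'(v_x)$ in the relevant classes, and one space derivative drops $g(v_x)_x$ into $H^{s-2}$. Corollary \ref{cor-heat} then yields a unique $\eta\in C^{1}([0,T];H^{s})$ with $\eta(0)=\eta_0$, $\eta_t(0)=\eta_1$ satisfying \eqref{eta-est} and \eqref{eta-t-est}; uniqueness is inherited from the abstract linear theory behind Lemma \ref{Pazy-thm}.

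The second step is to replace each abstract quantity on the right of \eqref{eta-est}--\eqref{eta-t-est} by the explicit data appearing in \eqref{eta-est-v-1}--\eqref{eta-est-v-2}. Since $\partial_x:H^{s-1}\to H^{s-2}$ is bounded and $v(\cdot,0)=\eta_0$, Lemma \ref{bound-lemma} gives $\|G_v(0)\|_{H^{s-2}}\le\|g((\eta_0)_x)\|_{H^{s-1}}\le K_1(\delta)\|(\eta_0)_x\|_{H^{s-1}}\le K_1(\delta)\|\eta_0\|_{H^{s}}$, which produces the $K_1(\delta)\|\eta_0\|_{H^{s}}$ term. The same lemma applied at time $\tau$ yields $\|G_v(\tau)\|_{H^{s-2}}\le K_1(\delta)\|v(\tau)\|_{H^{s}}$, the first term inside the integral, while $\|F_v(\tau)\|_{H^{s}}=\|v_\tau(\tau)\|_{H^{s}}$ supplies the pure $\|v_\tau\|_{H^{s}}$ contribution.

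The heart of the argument, and the step I expect to be the main obstacle, is the bound on $\|G_{v,\tau}(\tau)\|_{H^{s-2}}$. Differentiating $G_v=g(v_x)_x$ in time gives $G_{v,\tau}=\partial_x\big(g'(v_x)\,v_{x\tau}\big)$, so $\|G_{v,\tau}\|_{H^{s-2}}\le\|g'(v_x)\,v_{x\tau}\|_{H^{s-1}}$. The difficulty is that $g'(v_x)$ need not lie in $H^{s-1}$, because $g'(0)\ne0$ in general and nonzero constants are not square-integrable on $\mathbb{R}$. I would resolve this by writing $g'(v_x)=g'(0)+\big(g'(v_x)-g'(0)\big)$, where the bracketed function vanishes at $v_x=0$, and applying the product estimate \eqref{product-estimate} to each piece. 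For the constant piece, $\|g'(0)\,v_{x\tau}\|_{H^{s-1}}\le\|g'(0)\|_{H^{s-1}}\,\|v_\tau\|_{H^{s}}$; for the remaining piece, Lemma \ref{llc-lemma} (with $z_1=v_x$, $z_2=0$) together with the Sobolev embedding $H^{s-1}\hookrightarrow L^{\infty}$ (valid since $s>5/2$) give $\|g'(v_x)-g'(0)\|_{H^{s-1}}\le K_2(\delta)\|v\|_{H^{s}}$ and $\|v_{x\tau}\|_{L^{\infty}}\le C\|v_\tau\|_{H^{s}}$. Collecting these bounds yields $\|G_{v,\tau}(\tau)\|_{H^{s-2}}\le C\big(K_2(\delta)\|v(\tau)\|_{H^{s}}+\|g'(0)\|_{H^{s-1}}\big)\|v_\tau(\tau)\|_{H^{s}}$.

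Finally, adding the $\|F_v(\tau)\|_{H^{s}}=\|v_\tau(\tau)\|_{H^{s}}$ term to the previous bound reproduces the factor $\big(K_2(\delta)\|v\|_{H^{s}}+\|g'(0)\|_{H^{s-1}}+1\big)\|v_\tau\|_{H^{s}}$ that appears in \eqref{eta-est-v-1}--\eqref{eta-est-v-2}. Substituting the three displayed estimates into \eqref{eta-est} and \eqref{eta-t-est} and absorbing all fixed multiplicative constants (including the embedding and product-estimate constants) into the generic constant $C$ gives exactly \eqref{eta-est-v-1} and \eqref{eta-est-v-2}, which completes the proof. The only genuinely delicate points are the composition/product estimates needed to place $g'(v_x)$ and $g(v_x)_x$ in the right time-regularity classes, and the splitting off of $g'(0)$ in the bound for $G_{v,\tau}$; everything else is a routine substitution into the output of Corollary \ref{cor-heat}.
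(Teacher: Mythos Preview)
Your proposal is correct and follows essentially the same route as the paper: verify the ellipticity condition \eqref{pos} via $\|v_x\|_{L^\infty}\le\delta$, invoke Corollary \ref{cor-heat} with $a=g'(v_x)$, $G=G_v$, $F=F_v$, and then translate \eqref{eta-est}--\eqref{eta-t-est} into \eqref{eta-est-v-1}--\eqref{eta-est-v-2} using Lemma \ref{bound-lemma} for $G_v$ and the splitting $g'(v_x)=g'(0)+\big(g'(v_x)-g'(0)\big)$ together with Lemma \ref{llc-lemma} for $(G_v)_t$. The paper's proof is organized identically, including the same add-and-subtract trick for $g'(0)$ that you flagged as the delicate point.
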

\begin{proof}
We would like to use Corollary \ref{cor-heat} for which we need that, due to  \eqref{pos},  $g'(v_{x}(x, t)) \geq \theta > 0$ holds. We ensure this using $g'(0) > 0$ and the continuity argument mentioned above since $\|v_{x}(t)\|_{L^{\infty}}\leq \delta$ is assumed. Now, by Corollary \ref{cor-heat} we have existence of $\eta \in C^{1}([0, T]; H^{s})$ satisfying
 estimates \eqref{eta-est} and \eqref{eta-t-est}. Applying these to \eqref{A-v-sys} we obtain 
\begin{align}
\|\eta(t)\|_{H^{s}} & \leq \|\eta_{0}\|_{H^{s}} +C T\Big(\|\eta_{1}\|_{H^{s}} + \|G_{v}(0)\|_{H^{s-2}} \Big) \nonumber \\
&    + C\,T \,\int_{0}^{t} \Big(\|G_{v}(\tau)\|_{H^{s-2}} + \left\| (G_{v}(\tau))_{\tau}\right\|_{H^{s-2}} + \|F_{v}(\tau)\|_{H^{s}}\Big) d\tau, \label{eta-v-est} \\
 \|\eta_{t}(t)\|_{H^{s}} & \leq C \bigg( \|\eta_{1}\|_{H^{s}} +  \|G_{v}(0)\|_{H^{s-2}} \nonumber \\
 & \quad  + \int_{0}^{t} \Big(\| G_{v}(\tau) \|_{H^{s-2}} + \|(G_{v}(\tau))_{\tau} \|_{H^{s-2}} +  \| F_{v}(\tau)\|_{H^{s}}\Big) d\tau  \bigg).\label{eta-t-v-est}
 \end{align} 
Moreover, we have
 \begin{align}
\|G_{v}(t)\|_{H^{s-2}} & = \|g(v_{x}(t))_{x}\|_{H^{s-2}} = \|g(v_{x}(t))\|_{H^{s-1}} \nonumber\\
&  \leq K_{1}(\delta) \| v_{x}(t)\|_{H^{s-1}}  = K_{1}(\delta) \| v(t)\|_{H^{s}},\label{G-v-est}
 \end{align}
where we used Lemma \ref{bound-lemma} with constant $K_{1}(\delta)$. This immediately gives
\begin{equation}\label{G-v-0-est}
\|G_{v}(0)\|_{H^{s-2}} \leq K_{1}(\delta) \|v(0)\|_{H^{s}} = K_{1}(\delta)  \|\eta_{0}\|_{H^{s}}.
\end{equation}
Also, 
 \begin{equation*}
\|(G_{v})_{t}(t)\|_{H^{s-2}}  = \|g(v_{x})_{xt}(t)\|_{H^{s-2}} =  \|g(v_{x})_{t}(t)\|_{H^{s-1}} =   \| (g'(v_{x})v_{xt})(t)\|_{H^{s-1}}.
 \end{equation*}
Since $s > 5/2$ by the product estimate \eqref{product-estimate} we have 
\begin{equation*}
\begin{split}
&  \| (g'(v_{x}) v_{xt})(t)\|_{H^{s-1}}  \leq C \| g'(v_{x})(t)\|_{H^{s-1}} \|v_{xt}(t)\|_{H^{s-1}} \\
& \quad \leq C \Big(\| g'(v_{x})(t) - g'(0)\|_{H^{s-1}} \|v_{xt}(t)\|_{H^{s-1}} + \| g'(0)\|_{H^{s-1}} \|v_{xt}(t)\|_{H^{s-1}}\Big).
\end{split}
\end{equation*}
Hence, applying Lemma \ref{llc-lemma} to $g'$ we obtain
\begin{align}
\|(G_{v})_{t}(t)\|_{H^{s-2}} & \leq C \Big( K_{2}(\delta) \|v_{x}(t)\|_{H^{s-1}} \|v_{xt}(t)\|_{H^{s-1}} + \| g'(0)\|_{H^{s-1}} \|v_{xt}(t)\|_{H^{s-1}}\Big) \nonumber \\
&  =  C\|v_{t}(t)\|_{H^{s}}  \big(K_{2}(\delta) \|v(t)\|_{H^{s}} + \| g'(0)\|_{H^{s-1}}  \big).\label{G-v-t-est}
\end{align}
Combining \eqref{G-v-est},  \eqref{G-v-t-est} and using $\|F_{v} (t) \|_{H^{s}} = \|v_{t}(t) \|_{H^{s}}$  for the integral term in \eqref{eta-v-est} and \eqref{eta-t-v-est} we obtain 
\begin{equation*}
\begin{split}
 \int_{0}^{t} \Big(\|G_{v}(\tau)\|_{H^{s-2}} & + \left\| (G_{v}(\tau))_{\tau}\right\|_{H^{s-2}} + \|F_{v}(\tau)\|_{H^{s}}\Big) d\tau \\
& \leq \int_{0}^{t} \Big( K_{1}(\delta) \|v(\tau)\|_{H^{s}} + C \|v_{\tau}(\tau)\|_{H^{s}}\big(K_{2}(\delta) \|v(\tau)\|_{H^{s}} \\
& \qquad \qquad +   \| g'(0)\|_{H^{s-1}}\big) + \|v_{\tau}(\tau)\|_{H^{s}} \Big) d\tau.
\end{split}
\end{equation*}
Together with \eqref{G-v-0-est}, this implies  \eqref{eta-est-v-1} and \eqref{eta-est-v-2}  as required.
\end{proof}

\subsection{Local existence for the main problem}

We will now construct a fixed point scheme in order to prove local well-posedness for the Cauchy problem \eqref{NL-eqn}-\eqref{ic-eta}. To use Proposition \ref{linear}, we need that $\|\eta_{x}(t)\|_{L^{\infty}} \leq  \delta$ for all $t \in [0, T].$ Instead of assuming this directly, we will make the assumption that $\|\eta(t)\|_{H^{s}} \leq \bar{\delta}$ for all $t \in [0, T]$, which will imply the necessary condition since $s > 5/2$. This is because
\[\|\eta_{x}(t)\|_{L^{\infty}} \leq \bar{C} \|\eta_{x}(t)\|_{H^{s-2}} \leq \bar{C} \|\eta_{x}(t)\|_{H^{s-1}} \leq \bar{C} \| \eta(t)\|_{H^{s}} \leq \delta,\]
where $\bar{C}$ is the embedding constant and $\delta = \bar{C} \bar{\delta}$.

Another issue is related to boundedness of $\|\eta_{t}(t)\|_{H^{s}}$. We do not need that it is small but we need a uniform bound. Therefore we will make the assumption that $\|\eta_{t}(t)\|_{H^{s}} \leq M$ for all $t \in [0, T]$ where the constant $M$ will be chosen later. 

Now, we look for a solution $\eta$ in the Banach space defined by
\begin{equation*}
\begin{split}
X^{s}([0, T])  & = \Big\{z \in C^{1}([0, T]; H^{s})  \Big| z(0) = \eta_{0}, z_{t}(0) = \eta_{1},  \\
& \qquad \qquad \qquad \qquad \|z(t)\|_{H^{s}} \leq \bar{\delta}, \|z_{t}(t)\|_{H^{s}} \leq M, t \in [0, T] \Big\}
\end{split}
\end{equation*}
and endowed with the norm
\begin{equation}\label{norm-X}
\|z\|_{X^{s}([0, T])} = \underset{t \in [0, T]}{\sup} \Big(\| z(t) \|_{H^{s}} + \| z_{t}(t) \|_{H^{s}}\Big).
\end{equation}

\begin{theorem}\label{local-existence}
Let $s > 5/2$. Assume $g \in C^{r+1}$ with $r= [s]+1$. Assume also that $\eta_{0} \in H^{s}$ with $\| \eta_{0}\|_{H^{s}} \leq \frac{\bar{\delta}}{2 ( 1 + T_{0} K_{1}(\delta))}$ for some $T_{0} > 0$, where $\delta$ and $K_{1}(\delta)$ are as in Proposition \ref{linear}, and $\bar{\delta}$ is as in the definition of $X^{s}([0, T])$. Then, there exists a sufficiently small time $T > 0$ with $T \leq T_{0}$, such that the Cauchy problem \eqref{NL-eqn}-\eqref{ic-eta} admits a unique solution $\eta \in X^{s}([0, T])$.
\end{theorem}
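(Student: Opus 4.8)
The plan is to realize the solution of \eqref{NL-eqn}--\eqref{ic-eta} as a fixed point of the solution map furnished by the linear theory. For $v$ in the ball $X^{s}([0,T])$ the hypotheses of Proposition \ref{linear} are met --- in particular $\|v_{x}(t)\|_{L^{\infty}}\le\delta$ follows from $\|v(t)\|_{H^{s}}\le\bar{\delta}$ through the embedding chain already displayed, with $\delta=\bar{C}\bar{\delta}$ --- so the linearized equation \eqref{eta-eqn-linear} has a unique $\eta\in C^{1}([0,T];H^{s})$ with the prescribed data. Define $\mathcal{T}v=\eta$. It then suffices to choose the parameter $M$ and a time $T\le T_{0}$ for which $\mathcal{T}$ maps $X^{s}([0,T])$ into itself and is a contraction; Banach's fixed point theorem then yields the unique $\eta\in X^{s}([0,T])$ solving \eqref{NL-eqn}--\eqref{ic-eta}.

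First I would verify the self-mapping property from the estimates \eqref{eta-est-v-1}--\eqref{eta-est-v-2}. Inserting the a priori bounds $\|v(\tau)\|_{H^{s}}\le\bar{\delta}$ and $\|v_{\tau}(\tau)\|_{H^{s}}\le M$, every integrand is controlled by a constant depending only on $\bar{\delta}$, $M$, $K_{1}(\delta)$, $K_{2}(\delta)$ and $\|g'(0)\|_{H^{s-1}}$, so each integral contributes a factor $O(T)$. In \eqref{eta-est-v-1} the integral-free terms collapse to $\|\eta_{0}\|_{H^{s}}\bigl(1+CTK_{1}(\delta)\bigr)$, which the smallness hypothesis on $\|\eta_{0}\|_{H^{s}}$ (tailored precisely to this term) keeps below $\bar{\delta}/2$ for every $T\le T_{0}$; shrinking $T$ then forces the remaining $O(T)$ contributions below $\bar{\delta}/2$, giving $\|\eta(t)\|_{H^{s}}\le\bar{\delta}$. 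For \eqref{eta-est-v-2} I would first fix $M$ large enough to dominate the integral-free part $C\bigl(\|\eta_{1}\|_{H^{s}}+K_{1}(\delta)\|\eta_{0}\|_{H^{s}}\bigr)$ and then shrink $T$ so that the $O(T)$ integral stays within the residual budget, yielding $\|\eta_{t}(t)\|_{H^{s}}\le M$. Hence $\mathcal{T}\bigl(X^{s}([0,T])\bigr)\subseteq X^{s}([0,T])$.

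The contraction step is the heart of the matter. For $v_{1},v_{2}\in X^{s}([0,T])$ set $\eta_{i}=\mathcal{T}v_{i}$ and $w=\eta_{1}-\eta_{2}$, so that $w(0)=0$ and $w_{t}(0)=0$. Because $\eta_{1}$ and $\eta_{2}$ solve \eqref{eta-eqn-linear} with the \emph{different} operators $A_{v_{1}}$ and $A_{v_{2}}$, I would not subtract naively but first split the viscous difference as $g'(v_{1x})\eta_{1,xt}-g'(v_{2x})\eta_{2,xt}=g'(v_{1x})w_{xt}+\bigl(g'(v_{1x})-g'(v_{2x})\bigr)\eta_{2,xt}$, so that $w$ obeys a single linear equation driven by the fixed operator $A_{v_{1}}$, with zero data and source
\[\bigl(g(v_{1x})-g(v_{2x})\bigr)_{x}+\nu\bigl(\bigl(g'(v_{1x})-g'(v_{2x})\bigr)\eta_{2,xt}\bigr)_{x}+(v_{1t}-v_{2t}).\]
The elastic difference and the data-free inhomogeneity are Lipschitz in $v_{1}-v_{2}$ by Lemma \ref{llc-lemma}, and the commutator is controlled through the product estimate \eqref{product-estimate} with the background factor $\eta_{2,xt}$ frozen by the uniform bound $\|\eta_{2,t}\|_{H^{s}}\le M$. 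Since the source does not involve $w$ itself, the zero initial data removes every boundary contribution in the linear estimate of Corollary \ref{cor-heat}, and a direct integration leaves a bound of the form $\|w\|\le C(\bar{\delta},M)\,T^{\beta}\,\|v_{1}-v_{2}\|$ for some $\beta>0$, so a further shrinking of $T$ renders $\mathcal{T}$ a contraction.

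I expect the genuine obstacle to be a loss of derivatives in this commutator term. Feeding $\bigl((g'(v_{1x})-g'(v_{2x}))\eta_{2,xt}\bigr)_{x}$ into Corollary \ref{cor-heat} at the full order $s$ is impossible: it lies only in $H^{s-2}$, too rough for the $F$-slot (which demands $H^{s}$), while placing it in the $G$-slot would require controlling its \emph{time} derivative in $H^{s-2}$, and differentiation produces $\bigl(g'(v_{1x})-g'(v_{2x})\bigr)\eta_{2,xtt}$ with $\eta_{2,xtt}$ only in $H^{s-3}$ --- since the equation yields merely $\eta_{2,tt}\in H^{s-2}$ --- which fails for $5/2<s\le 7/2$. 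The way around is to measure the difference in a weaker norm and to exploit the parabolic smoothing of the evolution system $\Phi(t,\tau)$ underlying Lemma \ref{Pazy-thm}: gaining almost two derivatives against an integrable singularity $(t-\tau)^{-\gamma}$ with $\gamma<1$ absorbs the commutator at the cost of a factor $T^{\beta}$ with $\beta=1-\gamma>0$, while keeping $g'(v_{1x})-g'(v_{2x})$ at the matching lower regularity and $\eta_{2,xt}$ in the high-norm slot bounded by $M$. The uniform strong bound secured in the self-map step, together with the closedness of the $X^{s}$-ball under weak-norm limits, then guarantees that the weak-norm fixed point in fact lies in $X^{s}([0,T])$, completing the argument.
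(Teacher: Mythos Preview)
Your overall architecture is the paper's: linearize around $v$, use Proposition~\ref{linear} to define the solution map (the paper calls it $\mathcal{K}$), verify self-mapping via \eqref{eta-est-v-1}--\eqref{eta-est-v-2} with the same choice of $M$, then for the contraction subtract two linearizations and absorb the operator discrepancy $(A_{v_1}-A_{v_2})\bar\phi$ into the source as a commutator. The self-mapping argument is essentially identical to the paper's.

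Where you diverge is in how carefully you treat the contraction, and here you have in fact been more scrupulous than the paper. The paper writes the difference equation as $\psi_t+A_v\psi=(\tilde G_v-\tilde G_{\bar v})+(F_v-F_{\bar v})$ with $\tilde G_v=G_v-A_v\bar\phi$, estimates $\|(A_v-A_{\bar v})\bar\phi\|_{H^{s-2}}$, and then simply \emph{asserts} that $\tilde G_v-\tilde G_{\bar v}\in C^1([0,T];H^{s-2})$ so that Corollary~\ref{cor-heat} applies at the full order $s$; it then records contraction estimates of exactly the shape \eqref{eta-est-v-1}--\eqref{eta-est-v-2} with $v$ formally replaced by $v-\bar v$. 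The $C^1$-in-time hypothesis on $\tilde G$ is never verified, and --- as you correctly diagnose --- differentiating the commutator produces $(g'(v_x)-g'(\bar v_x))\bar\phi_{xt}$, which drags in $\bar\eta_{xtt}\in H^{s-3}$ and does not reach the required $H^{s-1}$. The paper's displayed contraction estimates simply omit any contribution from this term.

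Your proposed remedy --- contract in a weaker norm, use the analytic-semigroup smoothing of $\Phi(t,\tau)$ to absorb the commutator with an integrable singularity $(t-\tau)^{-\gamma}$, $\gamma<1$, and recover the full $H^s$ regularity of the fixed point from the uniform $X^s$ bound via closedness of the ball --- is the standard cure for this kind of loss and is more honest than the paper's presentation. One detail to keep straight when you carry it out: smoothing from $H^{s-2}$ all the way to $H^s$ carries a non-integrable $(t-\tau)^{-1}$, so the contraction norm must be taken strictly below $s$ (e.g.\ $H^{s-1}$); then $g'(v_{1x})-g'(v_{2x})$ is measured at the lower order while $\eta_{2,xt}$ stays in $H^{s-1}$ bounded by $M$, and the commutator lands in the right space without any need to control its time derivative.
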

\begin{proof}
Let $v \in X^{s}([0, T])$ be given. Define the operator
\begin{equation*}
\mathcal{K}(v) = \eta.
\end{equation*}
Clearly $\mathcal{K}$ is a map which takes $v$ to a solution of \eqref{eta-eqn-linear}. We need to show that for suitably chosen $T$, $\mathcal{K}$ has a unique fixed point in $X^{s}([0, T])$. First we show that $\mathcal{K}$ is a map from $X^{s}([0, T])$ into itself. Clearly, $\mathcal{K}(v(0)) = \eta_{0}$ and $\mathcal{K}(v_{t}(0)) = \eta_{1}$  as required. From the above discussion, we know that $\|v(t)\|_{H^{s}} \leq \bar{\delta}$ implies $\|v_{x}(t)\|_{L^{\infty}}\leq \delta$. Hence, by Proposition \ref{linear} we know that solution $\eta$ belongs to $X^{s}([0, T])$ and satisfies the estimates
\begin{equation*}
\begin{split}
 \|\eta(t)\|_{H^{s}}   & \leq  \|\eta_{0}\|_{H^{s}} +  C T \Bigg(K_{1}(\delta) \|\eta_{0}\|_{H^{s}} +  \|\eta_{1}\|_{H^{s}}  \\
& \qquad \qquad \qquad +  \int_{0}^{t} \Big( K_{1}(\delta) \|v(\tau)\|_{H^{s}} + \big(K_{2}(\delta) \|v(\tau)\|_{H^{s}} \\
& \qquad \qquad \qquad +   \| g'(0)\|_{H^{s-1}} +1 \big) \|v_{\tau}(\tau)\|_{H^{s}} \Big) d\tau \Bigg), \\
 \|\eta_{t}(t)\|_{H^{s}} & \leq C \Bigg( \|\eta_{1}\|_{H^{s}} + K_{1}(\delta)  \|\eta_{0}\|_{H^{s}} + \int_{0}^{t} \Big( K_{1}(\delta) \|v(\tau)\|_{H^{s}}  \\
 & \qquad + \big(K_{2}(\delta) \|v(\tau)\|_{H^{s}} +  \| g'(0)\|_{H^{s-1}} + 1\big)\|v_{\tau}(\tau)\|_{H^{s}}  \Big) d\tau \Bigg).
\end{split}
\end{equation*}
For the first estimate, since $\| \eta_{0}\|_{H^{s}} \leq \frac{\bar{\delta}}{2 ( 1 + T_{0} K_{1}(\delta))}$, we have
\[\|\eta(t)\|_{H^{s}} \leq \frac{\bar{\delta}}{2} + C T M+ CT^{2}\Big(K_{1}(\delta) \bar{\delta} + \big(K_{2}(\delta) \bar{\delta}+\| g'(0)\|_{H^{s-1}}+ 1\big) M\Big).\]
Choosing $T$ small enough, we can ensure that $\|\eta(t)\|_{H^{s}} \leq \bar{\delta}$. For the second estimate, we have
\[\|\eta_{t}(t)\|_{H^{s}} \leq \frac{\bar{\delta}}{2} + C \|\eta_{1}\|_{H^{s}}+ C T \Big(K_{1}(\delta) \bar{\delta} + \big( K_{2}(\delta)\bar{\delta} + \| g'(0)\|_{H^{s-1}}+1\big)M \Big).\]
The last term with $T$ in front can be made small so that we obtain $\|\eta_{t}(t)\|_{H^{s}} \leq C \|\eta_{1}\|_{H^{s}} + \bar{\delta}$. Choosing $M \geq C \|\eta_{1}\|_{H^{s}} + \bar{\delta}$  we obtain $ \|\eta_{t}(t)\|_{H^{s}} \leq M$ so that $\eta \in X^{s}([0, T])$.

We now prove that for small enough $T$, the map $\mathcal{K}$ is a contractive in $X^{s}([0, T])$.
For this, let us take $v, \bar{v} \in X^{s}([0, T])$ such that $\eta = \mathcal{K}(v)$ and $\bar{\eta} = \mathcal{K}(\bar{v})$ with $\phi = \eta_{t}$, $\bar{\phi}= \bar{\eta}_{t}$. Set $V = v - \bar{v}$ and $\psi = \phi - \bar{\phi}$ so that $V(x, 0) = V_{t}(x, 0) = 0$ and $\psi(x, 0) = \psi_{t}(x, 0) = 0$. Then, writing \eqref{A-v-sys} for $\phi$ and $\bar{\phi}$ and subtracting we have
\begin{equation}\label{U-eqn}
\psi_{t} + A_{v}(t) \psi = (\tilde{G}_{v}(t) - \tilde{G}_{\bar{v}}(t)) + (F_{v}(t) - F_{\bar{v}}(t)),
\end{equation}
where $\tilde{G}_{v}(t) = G_{v}(t) - A_{v}(t) \bar{\phi}$ and $\tilde{G}_{\bar{v}}(t) = G_{\bar{v}}(t) - A_{\bar{v}}(t) \bar{\phi}$. 
By definition of the operator $A_{v}$ in \eqref{A-G-F-v}  and the product estimate \eqref{product-estimate} we have
\begin{equation*}
\begin{split}
\big\| (A_{v}(t) - A_{\bar{v}}(t)) \bar{\phi}(t) \big\|_{H^{s-2}} & = \nu \big\| \big((g'(\bar{v}_{x}) - g'(v_{x})) \bar{\phi}_{x}\big)_{x} (t) \big\|_{H^{s-2}} \\
& = \nu \big\|(g'(\bar{v}_{x}) - g'(v_{x})) \bar{\phi}_{x}(t) \big\|_{H^{s-1}}\\
& \leq \nu \big\|\bar{\phi}_{x}(t)\big\|_{H^{s-1}} \big\|\big(g'(\bar{v}_{x}) - g'(v_{x})\big)(t) \big\|_{H^{s-1}}
\end{split}
\end{equation*}
Now, by Lemma \ref{llc-lemma} we can continue as
\begin{equation*}
\begin{split}
&  \big\| (A_{v}(t) - A_{\bar{v}}(t)) \bar{\phi} \big\|_{H^{s-2}}   \leq C K_{2}(\delta)   \big\|\bar{v}_{x}(t) - v_{x}(t) \big\|_{H^{s-1}} \big\|\bar{\eta}_{xt}(t) \big\|_{H^{s-1}} \\
& \qquad   \leq  C   K_{2}(\delta) \big\|\bar{v}(t) - v(t) \big\|_{H^{s}} \big\|\bar{\eta}_{t}(t) \big\|_{H^{s}} \leq C M K_{2}(\delta) \big\|\bar{v}(t) - v(t)\big\|_{H^{s}}\\
& \qquad = C M K_{2}(\delta) \left\| \int_{0}^{t} \frac{\partial}{\partial \tau} (\bar{v}(\tau) - v(\tau)) d\tau \right\|_{H^{s}} \leq   C T M K_{2}(\delta) \big\|\bar{v}_{t}(t) - v_{t}(t)\big\|_{H^{s}} .
\end{split}
\end{equation*}
Therefore $\big(\tilde{G}_{v}(t) - \tilde{G}_{\bar{v}}(t)\big) \in C^{1}([0, T]; H^{s-2})$ which implies we can apply Proposition \ref{linear}. Hence from \eqref{eta-est-v-1} and \eqref{eta-est-v-2}, we obtain
\begin{equation*}
\begin{split}
&  \big\|\eta(t) - \bar{\eta}(t) \big\|_{H^{s}} \leq C \,T \int_{0}^{t} \Big( K_{2}(\delta) \big\|v(\tau) - \bar{v}(\tau) \big\|_{H^{s}} \\
& \quad \quad \quad + \big(K_{2}(\delta) \big\|v(\tau) - \bar{v}(\tau) \big\|_{H^{s}} +   \big\| g'(0)\big\|_{H^{s-1}} +1 \big) \big\|v_{\tau}(\tau)- \bar{v}_{\tau}(\tau)\big\|_{H^{s}} \Big) d\tau,  \\
 & \big\| \eta_{t}(t) - \bar{\eta}_{t}(t)\big\|_{H^{s}}   \leq C \int_{0}^{t} \Big( K_{2}(\delta) \big\|v(\tau) - \bar{v}(\tau) \big\|_{H^{s}} \\
 & \quad \quad \quad + \big(K_{2}(\delta)  \big\|v(\tau) - \bar{v}(\tau)\big\|_{H^{s}} +  \big\| g'(0)\big\|_{H^{s-1}} + 1\big) \big\|v_{\tau}(\tau)- \bar{v}_{\tau}(\tau)\big\|_{H^{s}}  \Big) d\tau.
\end{split}
\end{equation*}
For the first estimate we obtain
\begin{equation*}
\begin{split}
& \underset{t \in [0, T]}{\sup} \big\|\eta(t) - \bar{\eta}(t)\big\|_{H^{s}} \\
& \leq CT^{2} \Big(K_{2}(\delta) \underset{t \in [0, T]}{\sup} \big\|v(t) - \bar{v}(t)\big\|_{H^{s}} \\
& \quad  + \big(K_{2}(\delta)  \underset{t \in [0, T]}{\sup} \big\|v(t) - \bar{v}(t)\big\|_{H^{s}}+  \big\| g'(0)\big\|_{H^{s-1}} + 1\big)  \underset{t \in [0, T]}{\sup} \big\|v_{t}(t) - \bar{v}_{t}(t)\big\|_{H^{s}}\Big) \\
& = CT^{2}K_{2}(\delta) (1+ M) \underset{t \in [0, T]}{\sup} \big\|v(t) - \bar{v}(t)\big\|_{H^{s}} \\
& \qquad \qquad + C T^{2} \big(\big\| g'(0)\big\|_{H^{s-1}} + 1 \big) \underset{t \in [0, T]}{\sup} \big\|v_{t}(t) - \bar{v}_{t}(t)\big\|_{H^{s}}.
\end{split}
\end{equation*}
For the second one we have
\begin{equation*}
\begin{split}
& \underset{t \in [0, T]}{\sup} \big\| \eta_{t}(t) - \bar{\eta}_{t}(t) \big\|_{H^{s}}  \\
& \leq C T \Big(K_{2}(\delta) \underset{t \in [0, T]}{\sup} \big\|v(t) - \bar{v}(t) \big\|_{H^{s}} +  K_{2}(\delta) M \underset{t \in [0, T]}{\sup} \big\|v_{t}(t)- \bar{v}_{t}(t)\big\|_{H^{s}} \\
& \qquad \qquad +(\big\| g'(0)\big\|_{H^{s-1}} + 1)  \underset{t \in [0, T]}{\sup} \big\|v_{t}(t)- \bar{v}_{t}(t)\big\|_{H^{s}}\Big) \\
& = C T  K_{2}(\delta) \underset{t \in [0, T]}{\sup} \big\|v(t) - \bar{v}(t)\big\|_{H^{s}} \\
&\qquad \qquad + \Big(C T K_{2}(\delta) M + C T \big(\big\| g'(0)\big\|_{H^{s-1}} + 1\big)\Big)  \underset{t \in [0, T]}{\sup} \big\|v_{t}(t) - \bar{v}_{t}(t)\big\|_{H^{s}}.
\end{split}
\end{equation*}
This gives
\begin{equation*}
\begin{split}
& \!\!\!\!\!\!\!\!\!\!\!\!\!\! \underset{t \in [0, T]}{\sup} \big\|\eta(t) - \bar{\eta}(t)\big\|_{H^{s}}+ \underset{t \in [0, T]}{\sup} \big\| \eta_{t}(t) - \bar{\eta}_{t}(t)\big\|_{H^{s}} \\
&  \leq \Big(CT^{2}K_{2}(\delta)(1+  M) + C T K_{2}(\delta)\Big)\underset{t \in [0, T]}{\sup} \big\|v(t) - \bar{v}(t)\big\|_{H^{s}} \\
&  \qquad + \Big(C T^{2} \big(\big\| g'(0)\big\|_{H^{s-1}} + 1\big)+ C T K_{2}(\delta) M \\
& \qquad + C T \big(\big\| g'(0)\big\|_{H^{s-1}} + 1 \big)\Big)\underset{t \in [0, T]}{\sup} \big\|v_{t}(t) - \bar{v}_{t}(t)\big\|_{H^{s}}.
\end{split}
\end{equation*}
Therefore, we have 
\begin{equation*}
\begin{split}
& \!\!\!\!\!\!\!\!\!\!\!\!\!\!\!\!\!\! \big\|\mathcal{K}(v) - \mathcal{K}(\bar{v})\big\|_{X^{s}([0, T])}  = \underset{t \in [0, T]}{\sup} \Big(\big\|\eta(t) - \bar{\eta}(t)\big\|_{H^{s}} + \big\|\eta_{t}(t) - \bar{\eta}_{t}(t)\big\|_{H^{s}}\Big)  \\
& \leq C T K_{2}(\delta) \big(T (1+ M) + 1\big) \underset{t \in [0, T]}{\sup} \big\|v(t) - \bar{v}(t)\big\|_{H^{s}} \\
& \quad  + C T \big(T \big(\big\| g'(0)\big\|_{H^{s-1}} + 1 \big)+ K_{2}(\delta) M\\
&  \quad  + \big(\big\| g'(0)\big\|_{H^{s-1}} + 1 \big)\big) \underset{t \in [0, T]}{\sup}  \big\|v_{t}(t) - \bar{v}_{t}(t)\big\|_{H^{s}} \\
& \leq L  \underset{t \in [0, T]}{\sup} \big(\big\|v - \bar{v}\big\|_{H^{s}} +  \big\|v_{t} - \bar{v}_{t}\big\|_{H^{s}} \Big)\\
&  \leq L \big\|v - \bar{v}\big\|_{X^{s}([0, T])}, 
\end{split}
\end{equation*}
with $L = C T \max\big\{K_{2}(\delta)\big(T (1+ M) + 1\big), (1+ T) \big(\big\| g'(0)\big\|_{H^{s-1}} + 1 \big)+ K_{2}(\delta) M \big\}$. We can choose $T$ small enough so that $L < 1$. This implies that $\mathcal{K}$ is a contraction, so by Banach's Fixed Point Theorem we have existence of a local solution $\eta$ to \eqref{NL-eqn}-\eqref{ic-eta} and local well-posedness.
\end{proof}

\begin{remark}
Due to the method and techniques used, it is possible to obtain a similar local existence result for a bounded, open domain $\Omega \subset \mathbb{R}$ with suitable boundary conditions.
\end{remark}

\begin{remark}
Recalling that $H^{s} \subset C^{k}$ if $s > \frac{1}{2} + k$, we observe that the solutions are classical and by choosing $s$ large they can be made as smooth as we want.
\end{remark}

\begin{remark}
In the above proof a smallness condition is required on $\| \eta_{0}\|_{H^{s}}$ only, not on $\| \eta_{1}\|_{H^{s}}$.
\end{remark}

\noindent Recalling $\omega = \eta_{x}$ we can state Theorem \ref{local-existence} for $\omega$ as follows:

\begin{theorem}\label{local-existence-omega}
Let $s > 5/2$. Assume $g \in C^{r+1}$ with $r= [s]+1$. Assume also that $\omega_{0}, \omega_{1} \in H^{s-1}$ with $\| \omega_{0}\|_{H^{s}} \leq \frac{\bar{\delta}}{2 ( 1 + T_{0} K_{1}(\delta))}$ for some $T_{0} > 0$, where $\delta$ and $K_{1}(\delta)$ are as in Proposition \ref{linear}, and $\bar{\delta}$ is as in $X^{s}([0, T])$. Then, there exists a sufficiently small time $T > 0$ with $T \leq T_{0}$, such that the Cauchy problem  \eqref{nonlinear-eqn}-\eqref{icw} admits a unique solution $\omega \in X^{s-1}([0, T])$.
\end{theorem}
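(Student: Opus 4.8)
The plan is to deduce Theorem \ref{local-existence-omega} from Theorem \ref{local-existence} by exploiting the relation $\omega = \eta_{x}$ that motivated the passage from \eqref{nonlinear-eqn} to \eqref{NL-eqn}. Given $\omega_{0}, \omega_{1} \in H^{s-1}$, I would first reconstruct admissible potentials by setting $\eta_{0}(x) = \int_{-\infty}^{x} \omega_{0}(y)\,dy$ and $\eta_{1}(x) = \int_{-\infty}^{x} \omega_{1}(y)\,dy$, so that $(\eta_{0})_{x} = \omega_{0}$ and $(\eta_{1})_{x} = \omega_{1}$. Assuming the decay conditions $\eta_{0}(\pm\infty) = \eta_{1}(\pm\infty) = 0$ discussed just before \eqref{NL-eqn} (equivalently $\int_{\mathbb{R}} \omega_{0} = \int_{\mathbb{R}} \omega_{1} = 0$; see the Appendix), these antiderivatives lie in $H^{s}$ with $\|(\eta_{0})_{x}\|_{H^{s-1}} = \|\omega_{0}\|_{H^{s-1}}$, and the smallness hypothesis imposed on $\omega_{0}$ translates, through this identification and the embedding $\|\eta_{x}\|_{H^{s-1}} \leq \|\eta\|_{H^{s}}$, into the bound $\|\eta_{0}\|_{H^{s}} \leq \frac{\bar{\delta}}{2(1 + T_{0} K_{1}(\delta))}$ required by Theorem \ref{local-existence}.

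With these data in hand, Theorem \ref{local-existence} yields a time $T \leq T_{0}$ and a unique $\eta \in X^{s}([0, T])$ solving \eqref{NL-eqn}-\eqref{ic-eta}. I would then define $\omega := \eta_{x}$ and verify directly that it is the desired solution. Differentiating \eqref{NL-eqn} once in $x$ turns $\eta_{tt} = g(\eta_{x})_{x} + \nu g(\eta_{x})_{xt}$ into $\omega_{tt} = g(\omega)_{xx} + \nu g(\omega)_{xxt}$, which is precisely \eqref{nonlinear-eqn}; evaluating at $t = 0$ gives $\omega(\cdot, 0) = (\eta_{0})_{x} = \omega_{0}$ and $\omega_{t}(\cdot, 0) = (\eta_{1})_{x} = \omega_{1}$, recovering \eqref{icw}. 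Membership $\omega \in X^{s-1}([0, T])$ is immediate because differentiation maps $C^{1}([0, T]; H^{s})$ into $C^{1}([0, T]; H^{s-1})$, and the inequalities $\|\omega(t)\|_{H^{s-1}} = \|\eta_{x}(t)\|_{H^{s-1}} \leq \|\eta(t)\|_{H^{s}} \leq \bar{\delta}$ together with $\|\omega_{t}(t)\|_{H^{s-1}} = \|(\eta_{t})_{x}(t)\|_{H^{s-1}} \leq \|\eta_{t}(t)\|_{H^{s}} \leq M$ transport the defining bounds of $X^{s}([0,T])$ to $X^{s-1}([0,T])$.

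For uniqueness in $X^{s-1}([0, T])$, I would run the correspondence backwards: any two solutions $\omega, \tilde{\omega}$ integrate to potentials $\eta, \tilde{\eta}$ that both solve \eqref{NL-eqn}-\eqref{ic-eta} with the same data $\eta_{0}, \eta_{1}$, so $\eta = \tilde{\eta}$ by the uniqueness already established in Theorem \ref{local-existence}, whence $\omega = \tilde{\eta}_{x} = \tilde{\omega}$. The only genuinely delicate point is the potential step: one must check that the $x$-antiderivative of an $H^{s-1}$ function actually belongs to $H^{s}$, which fails in general without the vanishing-mean and decay conditions $\eta_{0}(\pm\infty) = \eta_{1}(\pm\infty) = 0$. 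This is exactly where the Appendix conditions enter, and it is the step I expect to require the most care; the remaining verifications are routine consequences of the single-derivative relationship between the $\eta$- and $\omega$-formulations.
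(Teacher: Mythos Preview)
Your proposal is correct and follows exactly the route the paper takes: the paper does not give a separate proof of this theorem but simply presents it as the restatement of Theorem~\ref{local-existence} under the identification $\omega=\eta_{x}$, and your argument fills in the details of that passage. You are also right to flag the antiderivative step $\omega_{0},\omega_{1}\mapsto\eta_{0},\eta_{1}$ as the only point requiring care; the paper handles this implicitly by \emph{assuming} from the outset (just before \eqref{NL-eqn}) that $\omega_{0}=(\eta_{0})_{x}$, $\omega_{1}=(\eta_{1})_{x}$ for some $\eta_{0},\eta_{1}$ with the decay discussed in the Appendix, rather than reconstructing $\eta_{0},\eta_{1}$ from arbitrary $H^{s-1}$ data.
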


\section{A discussion about the original problem}\label{strain-lim}
\setcounter{equation}{0}

In the previous sections, under the assumption that the constitutive function $h(S)$ is strictly increasing, we have converted our original initial-value problem \eqref{PDE-T}-\eqref{ic} to the initial-value problem  \eqref{NL-eqn}-\eqref{ic-eta} and then we have proved existence of local solutions to this problem. In this section we first restate our existence result in terms of the displacement $u$ and the strain $u_{x}$. Then, we look at some examples to discuss the assumptions we made for the original problem \eqref{PDE-T}-\eqref{ic}. 

Recalling that $S= g(\omega)$ and $g$ is a sufficiently smooth function, the main conclusions in Theorem \ref{local-existence-omega} stated for $\omega$ are mainly true for the solution $S$ of the initial-value problem \eqref{PDE-T}-\eqref{ic}.

From the definitions $ \epsilon= u_{x}$, $\omega = \epsilon + \nu \epsilon_{t}$ and $\omega = \eta_{x}$, we actually have $\eta = u + \nu u_{t}$. Now, given $\eta$ we can rewrite this as
\[u_{t} + \frac{1}{\nu} u = \frac{1}{\nu} \eta,\]
which can be viewed as an ordinary differential equation in $u$. Solving it, we obtain
\begin{equation}\label{u}
u(x, t) = u(x, 0) e^{-\frac{t}{\nu}} + \frac{1}{\nu} \int_{0}^{t} e^{-\frac{1}{\nu}(t - \tau)} \eta(x, \tau) d\tau.
\end{equation}
Also, differentiating \eqref{u} we get 
\[u_{x}(x, t) = u_{x}(x, 0) e^{-\frac{t}{\nu}} + \frac{1}{\nu} \int_{0}^{t} e^{-\frac{1}{\nu}(t - \tau)} \omega(x, \tau) d\tau.\]
Since $\eta, \eta_{t} \in H^{s}$ and $\omega, \omega_{t} \in H^{s-1}$ for $s> 5/2,$ we immediately obtain $u, u_{t} \in H^{s}$ and $u_{x}, u_{xt} \in H^{s-1}$ for $s > 5/2$, respectively. As a result Theorem \ref{local-existence} and Theorem \ref{local-existence-omega} are valid for $u$ and $u_{x}$, respectively. 

Below we will consider some constitutive relations for which the invertibility condition is satisfied. However, several strain-limiting models for which the invertibility condition fails have been proposed in the literature.  For a more detailed discussion on the invertibility issue, we refer the reader to \cite{Mai-Walton} and \cite{Mai-Walton-2015} where  the notions of strong ellipticity and, as a weaker convexity notion, monotonicity are investigated for strain-limiting models in three-dimensional case. In \cite{Mai-Walton-2015}, Mai and Walton have  pointed out that, for the class of models studied therein,  monotonicity holds for strains with sufficiently small norms, and fails when strain is large enough. But at this point, we should note that, to stay within the context of strain-limiting theory, strain must remain small even for large stresses. In the present work we consider the following sample functions that are simplified versions of the nonlinear constitutive relations widely used in literature
\begin{eqnarray}
&& h(S)=\frac{S}{(1+ S^{2})^{1/2}},  \label{raj2011} \\
&& h(S)= \arctan S  \label{men2018},  \\
&&  h(S)=\left(-1+2\left(1+\frac{S^{2}}{2}\right)\right)S \label{kannan2014}
\end{eqnarray}
(where  the parameter values are chosen appropriately). We note that, in all cases, $h(0)=0$ as expected. The strain-limiting models \eqref{raj2011} and \eqref{kannan2014} correspond to the nonlinear constitutive relations proposed in \cite{Rajagopal-2011} and \cite{Kannan-Raj-Sac}, respectively. The model \eqref{men2018} has been recently proposed in \cite{Men-Orel-Bus}. The crucial fact about the above three cases is that we have  $h^{\prime}(S)>0$ for all $S$. This guarantees the existence of the inverse function $h^{-1}(\omega)=g(\omega)$ with $g(0) = 0$ on a suitable interval of $\omega$.

Our local existence proof for  \eqref{NL-eqn}-\eqref{ic-eta} is based on the assumption \eqref{pos}. For above functions, since $g'(\eta_{x}) > 0$ this is automatically satisfied. We now discuss how we can fix $\delta$ for the above particular forms of the constitutive relations. Assume that $\lim_{S\rightarrow \pm \infty}h(S)=\alpha_{\pm}$. In such case the inverse of $h$, that is $g(w)$, exists for $\alpha_{-}<\omega <\alpha_{+}$. This implies that we can take any $\delta$ satisfying  $0<\delta\leq \min\{\vert\alpha_{-}\vert,\vert\alpha_{+}\vert\}$. For the models \eqref{raj2011},  \eqref{men2018} and \eqref{kannan2014} we have $\alpha_{\pm}=\pm 1$, and $\alpha_{\pm}=\pm \frac{\pi}{2}$ and $\alpha_{\pm}=\pm \infty$, respectively. Consequently, for the three cases, the restriction for $\delta$ becomes $0<\delta <1$, $0<\delta < \frac{\pi}{2}$ and $0<\delta <\infty$, respectively.

\appendix

\section{Appendix} \label{Appendix}

The analysis in Section \ref{loc-existence} is based on the assumptions $\eta_{0}(\pm \infty) =0$ and $\eta_{1}(\pm \infty) = 0$. Since $\omega_{0}(x)= (\eta_{0}(x))_{x}$ and $\omega_{1}(x) = (\eta_{1}(x))_{x}$ these assumptions imply 
\begin{equation}\label{int-w-0}
\int_{-\infty}^{\infty} \omega_{0}(x) dx = 0 \quad \text{and} \quad \int_{- \infty}^{\infty} \omega_{1}(x) dx = 0,
\end{equation}
respectively. However, conditions \eqref{int-w-0} on the initial data $\omega_{0}$ and $\omega_{1}$ of the initial-value problem \eqref{nonlinear-eqn}-\eqref{icw} give further restrictions on $\omega$ and $\omega_{t}$ valid for any interval of time where the solution exists. This can be explained as follows. Integrating \eqref{omega-phi-1} in $x$ on $\mathbb{R}$ and noting that $\phi \to 0$ ($\eta_{t} \to 0$) as $x \to \pm \infty$, we get the conserved quantity 
\[\int_{-\infty}^{\infty} \omega(x, t) dx = \int_{-\infty}^{\infty} \omega(x, 0) dx.\]
Using the first condition in \eqref{int-w-0} with $\omega_{0}(x) = \omega(x, 0)$ we then obtain
\begin{equation}\label{int-w}
\int_{-\infty}^{\infty} \omega(x, t) dx =0
\end{equation}
for all $t$. Similarly, rewriting \eqref{nonlinear-eqn} in the form $ (\omega_{t})_{t} = \big(g(\omega)_{x} + \nu g(\omega)_{xt}\big)_{x}$, integrating in $x$ on $\mathbb{R}$ and noting that the terms on the right tend to zero as $x \to \pm\infty$, we get another conserved quantity
\[\int_{-\infty}^{\infty} \omega_{t}(x, t) dx= \int_{-\infty}^{\infty} \omega_{t}(x, 0) dx.\]
On the other hand, using the second condition in \eqref{int-w-0} with $\omega_{1} = \omega_{t}(x, 0)$ in this equation we obtain
\begin{equation}\label{int-wt}
\int_{-\infty}^{\infty} \omega_{t}(x, t)dx = 0
\end{equation}
for all $t$. The role of conditions \eqref{int-w} and \eqref{int-wt} is to guarantee that the initial data $\eta_{0}$ and $\eta_{1}$ satisfy $\eta_{0}(\pm \infty) =0$ and $\eta_{1}(\pm \infty) = 0$.
Integrating \eqref{omega-phi-2} in $x$ over $\mathbb{R}$ and using the zero conditions at infinity $\sigma(\pm \infty, t) = 0$, we obtain another conserved quantity 
\[\int_{-\infty}^{\infty} \phi(x, t) dx = \int_{-\infty}^{\infty} \phi(x, 0) dx,\]
for all $t$. 

\section*{Acknowledgements}

This  work  was  supported  by  the  Scientific  and  Technological Research Council of Turkey (T\"{U}B\.{I}TAK) under the grant 116F093.

\section*{References}
\bibliographystyle{plainnat}
\bibliography{bibliography}

\end{document}